\newfont{\nset}{msbm10}
\begin{document}
%\catchline{1}{1}{2021}{}{}

\markboth{X. Zhou \& Z. Zhang}
{Edge domination number and  the number of minimum edge dominating sets}

\title{EDGE DOMINATION NUMBER AND THE NUMBER OF MINIMUM EDGE DOMINATING SETS IN PSEUDOFRACTAL SCALE-FREE WEB AND SIERPI\'NSKI GASKET}

%%ÐÞ¸Ä

\author{XIAOTIAN ZHOU}

\address{{Shanghai Key Laboratory of Intelligent Information
Processing, School of Computer Science, Fudan University, Shanghai 200433, China}\\
\email{20210240043@fudan.edu.cn}}

\author{ZHONGZHI ZHANG\footnotemark}

\address{{Shanghai Key Laboratory of Intelligent Information
Processing, School of Computer Science, Fudan University, Shanghai 200433, China}\\
\email{zhangzz@fudan.edu.cn}}
%\affiliation{Shanghai Key Laboratory of Intelligent Information
%Processing, School of Computer Science, Fudan University, Shanghai 200433, China}

%\shortauthors{X. Zhou and Z. Zhang}
%\shorttitle{Edge domination number and  the number of minimum edge dominating sets...}%\shorttitle{Insert short title here for recto running head} %%%for recto running head
%\shortauthorlist{Xiaotian Zhou, Wanyue Xu,  Zhongzhi Zhang, and Haibin Kan} %\shortauthorlist{Insert short author list for verso running head} %%% for verso running head
%\titlerunning{short title}
%\author{{%%%% First author details
%\sc Xiaotian Zhou, Wanyue Xu,  Zhongzhi Zhang, and Haibin Kan}\\[2pt]
%Shanghai Key Laboratory of Intelligent Information Processing, School of Computer Science, Fudan University, Shanghai 200433, China\\Shanghai Engineering Research Institute of Blockchain, Fudan University, Shanghai 200433, China\\
%{\email{ zhangzz@fudan.edu.cn}}}
%{\email{16300180049@fudan.edu.cn; xuwy@fudan.edu.cn; zhangzz@fudan.edu.cn; hbkan@fudan.edu.cn}}\\[2pt]

\maketitle

\renewcommand{\thefootnote}{*}
\footnotetext[1]{Corresponding author.}
%
%\begin{history}
%\received{Day Month Year}
%\revised{Day Month Year}
%\end{history}

\begin{abstract}
As a fundamental research object, the minimum edge dominating set (MEDS) problem is of both theoretical and practical interest. However,  determining  the size of a MEDS and the number of all MEDSs in a general graph  is NP-hard, and it thus makes sense to find special graphs for which the MEDS problem can be exactly solved. In this paper, we study analytically  the MEDS problem in the pseudofractal scale-free web and the Sierpi\'nski gasket with the same  number of vertices and  edges. For both graphs, we obtain  exact expressions for  the edge domination number, as well as recursive solutions to the number of distinct MEDSs. In the pseudofractal scale-free web, the edge  domination number is one-ninth of the number of edges, which is three-fifths of the  edge  domination number of the Sierpi\'nski gasket. Moreover, the number of  all MEDSs in the pseudofractal scale-free web is also less than that corresponding to the  Sierpi\'nski gasket. We argue that the difference of the size and number of MEDSs between the two studied graphs lies in the scale-free topology.
%%%% If classification number provided then
\end{abstract}
\keywords{minimum edge dominating set, edge domination number, scale-free network, Sierpi\'nski gasket, complex network}

\newpage

\begin{multicols}{2}

\section{INTRODUCTION}

An edge dominating set (EDS) of a graph $\mathcal{G}$ with edge set $\mathcal{E}$ is a subset $\mathcal{F}$ of $\mathcal{E}$, such that each edge in $\mathcal{E}\setminus \mathcal{F}$ is adjacent to at least one edge belonging to $\mathcal{F}$~\cite{Mi77}. An independent edge dominating set is an EDS in which no two edges are adjacent. We call $\mathcal{F}$ a minimum edge dominating set (MEDS) if it has the smallest cardinality.  The cardinality of a MEDS is called the edge domination number of graph $\mathcal{G}$, which equals the size of  any minimum independent EDS of $\mathcal{G}$.   To find a MEDS of a given graph, known as  the MEDS problem, is a basic  graph problem and plays a critical role in graph algorithms.  The MEDS problem arises in various practical scenarios. For example, monitoring the state of communications  taking place among vertices of a wireless ad hoc network can be formulated as the MEDS problem~\cite{PaEsKaTa19}.  Moreover, The MEDS problem is closely related to many other graph problems, like the vertex cover problem and independent set problem, the latter of which has found  applications in different fields, e.g., coding theory~\cite{BuPaSeShSt02}, collusion detection in voting pools~\cite{ArFaDoSiKo11}, and scheduling in wireless networks~\cite{JoLiRySh16}.

Given its intrinsic importance in theoretical and practical scenarios, the MEDS problem has attracted considerable attention from various disciplines~\cite{CaLaLe09,ScVi12,XiNa13,XiNa14,EsMoPaXi15,GoHeKrVi15,FuSh18,ItKaKaKoOk19}. %, e.g., theoretical computer science.
However, the MEDS problem is one of the basic NP-hard problems. It is well-known that the problem is NP-hard even when the graph is limited to planar or bipartite graphs of maximum degree three~\cite{YaGa80}. In particular, counting all MEDSs in a graph is even more difficult, which is often \#P-complete~\cite{Va79TCS,Va79SiamJComput}. Thanks to the hardness of the MEDS problem in a generic graph, many efforts have been devoted to the problem in special graph classes, e.g., planar graph~\cite{WaYaGuCh13}, where the problem is also called matrix domination. Of particular interest for NP-hard and \#P-complete problems is to design or find specific graphs, where the problems can be solved exactly~\cite{LoPl86}. However, to the best of our knowledge,  there is  still a lack of rigorous results about edge domination number and the number of MEDSs.

On the other hand, extensive empirical work~\cite{Ne03} has demonstrated that a broad range of real-world networks display the typical scale-free property~\cite{BaAl99}, that is,  the degrees of their vertices follow a power-law distribution $P(k) \sim k^{-\gamma}$. It has been shown that this nontrivial feature has a great influence on diverse  structural, combinatorial, and dynamical properties of a power-law graph, such as average distances~\cite{ChLu02}, maximum matchings~\cite{LiSlBa11,ZhWu15}, dominating sets~\cite{GaHaK15,JiLiZh17}, epidemic spreading~\cite{ChWaWaLeFa08}, and noisy consensus~\cite{YiZhPa19}. Although there has been much interest in studying the MEDS problem in general graphs or some particular graphs, no existing work considered the MEDS problem for scale-free graphs. At present, the effect of scale-free topology on the MEDS problem is not understood, although it is expected to play a central role in the MEDS problem and to be significant for understanding the practical applications of MEDS problem in power-law graphs. Particularly, exact results about the edge domination number and the number of different MEDSs in a scale-free network is still lacking, in spite of the fact that exact results help to test approximation algorithms~\cite{CaLaLe09,ScVi12,FuSh18} for the MEDS problem.

In this paper, we study the edge domination number and the number of MEDSs in a scale-free graph, called pseudofractal scale-free web~\cite{DoGoMe02}. For comparison, we also study related quantities for the Sierpi\'nski gasket~\cite{FuSh92} that has found wide applications~\cite{ShWaChDaetal15,MoChDaWuWa19,Jietal20} and has received considerable attention~\cite{LaLuSu09,EtCa18,EtCa19,QiZh19,QiDoZhZh20,ChChXuZh20,WuChZhZh20}. Although both studied networks are self-similar and have the same number of vertices and edges, the pseudofractal scale-free web is heterogeneous, in sharp contrast to the Sierpi\'nski gasket that is homogeneous. By using a decimation technique~\cite{KnVa86} to both self-similar graphs, we find the exact edge domination number, as well the recursion solutions for the number of all possible MEDSs. The edge domination number of the pseudofractal scale-free web is three-fifths of that corresponding to the Sierpi\'nski gasket. In addition, the number of MEDSs of the former is also less than of the latter, although in both graphs, the number of MEDSs increases as an exponential function of the number of edges. We show that the architecture dissimilarity between the two nontrivial studied graphs is responsible for their difference in edge domination number and the number of MEDSs.

\section{EDGE DOMINATION NUMBER AND MINIMUM EDGE DOMINATING SETS IN PSEUDOFRACTAL SCALE-FREE WEB}

In this section, we determine the edge domination number and the number of minimum edge dominating sets  in the pseudofractal scale-free web.

\subsection{Network construction and properties}

The pseudofractal scale-free web~\cite{DoGoMe02}  is created using an iterative approach. Let  $\mathcal{G}_n$, $n\geq 1$, denote the $n$th generation network. Then the scale-free network is generated as follows. When $n=1$, $\mathcal{G}_1$ is a 3-clique, the complete graph of 3 vertices. For $n > 1$, $\mathcal{G}_n$ is obtained from $\mathcal{G}_{n-1}$ by performing the following operations: For each existent edge in $\mathcal{G}_{n-1}$, a new vertex is  created and linked to both endvertices of this edge. Figure~\ref{network} illustrates the networks for the first several generations. By construction, it is straightforward to  very that the number of edges in $\mathcal{G}_n$ is $E_n=3^{n+1}$.

%%%%%%%%%%%%%%%%%%%%%%%%%%%%%%%%%%%%%%%%%%%%%%%%%%%%%%%%%
% Figure  1
%%%%%%%%%%%%%%%%%%%%%%%%%%%%%%%%%%%%%%%%%%%%%%%%%%%%%%%%%%
%\begin{figure*}
%\begin{center}
%\includegraphics{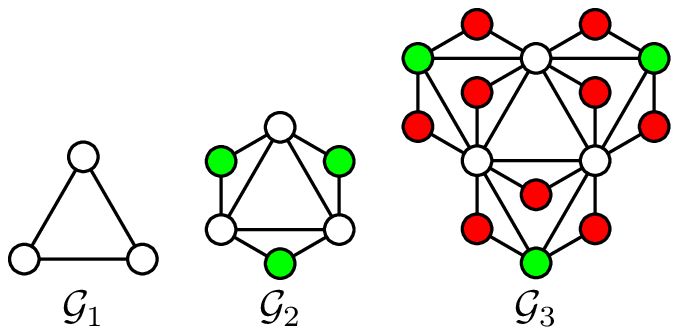}
%\end{center}
%\caption[kurzform]{The first three generations of the pseudofractal scale-free web.} \label{network}
%\end{figure*}

\begin{figurehere}
\centerline{
\includegraphics[width=.70\linewidth]{LinY_1.eps}
}
\caption[kurzform]{The first three generations of the pseudofractal scale-free web.} \label{network}
\end{figurehere}
%%%%%%%%%%%%%%%%%%%%%%%%%%%%%%%%%%%%%%%%%%%%%%%%%%%%%%%%%%

%%%%%%%%%%%%%%%%%%%%%%%%%%%%%%%%%%%%%%%%%%%%%%%%%%%%%%%%%
% Figure  2
%%%%%%%%%%%%%%%%%%%%%%%%%%%%%%%%%%%%%%%%%%%%%%%%%%%%%%%%%%
%\begin{figure*}
%\begin{center}
%\includegraphics[width=.70\linewidth]%,trim=120 65 150 520]
%{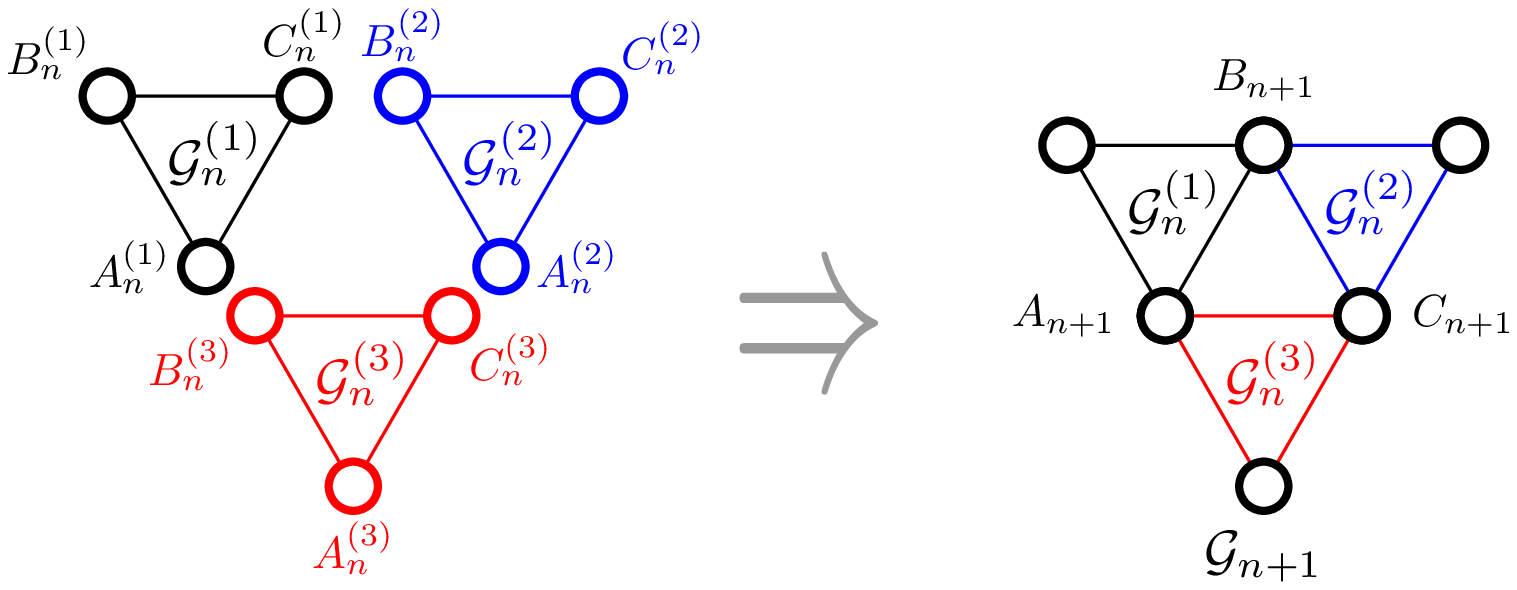}
%\caption{Another construction of the pseudofractal scale-free web, highlighting its self-similarity.} \label{mergeF}
%\end{center}
%\end{figure*}

\begin{figurehere}
\centerline{
\includegraphics[width=.90\linewidth]%,trim=120 65 150 520]
{SFsimilar.eps}
}
\caption{Another construction of the pseudofractal scale-free web, highlighting its self-similarity.} \label{mergeF}
\end{figurehere}
%%%%%%%%%%%%%%%%%%%%%%%%%%%%%%%%%%%%%%%%%%%%%%%%%%%%%%%%%%

The pseudofractal scale-free web exhibits the prominent properties observed in a majority of real-world networks. First, it is scale-free, since  its vertex degrees  obey a distribution of power-law form $P(k)\sim k^{1+\ln 3/\ln 2}$~\cite{DoGoMe02}. Moreover, it displays the small-world effect, since its average path length grows logarithmically with the number of vertices~\cite{ZhZhCh07} and its average clustering coefficient tends to a large constant $0.8$.

Another striking property of the pseudofractal scale-free web is its self-similarity, which is also ubiquitous  in real-life systems~\cite{SoHaMa05}. In $\mathcal{G}_n$, the degree of the initial three vertices in $\mathcal{G}_1$ is the largest, we thus call them hub vertices, and denote them by $A_{n}$, $B_{n}$, and $C_{n}$, respectively. The self-similarity suggests an alternative construction  of pseudofractal scale-free web~\cite{ZhZhCh07}. Given network $\mathcal{G}_{n}$, one can obtain $\mathcal{G}_{n+1}$   by merging three replicas of $\mathcal{G}_{n}$ at their hub vertices, see Fig.~\ref{mergeF}. Denote  $\mathcal{G}_{n}^{(\theta)}$, $\theta=1,2,3$ as the three copies of $\mathcal{G}_{n}$, and denote their   hub vertices by $A_{n}^{(\theta)}$, $B_{n}^{(\theta)}$, and $C_{n}^{(\theta)}$, respectively. Then, $\mathcal{G}_{n+1}$ can be obtained by joining $\mathcal{G}_{n}^{(\theta)}$, with $A_{n}^{(1)}$ (resp. $C_{n}^{(1)}$, $A_{n}^{(2)}$) and $B_{n}^{(3)}$ (resp. $B_{n}^{(2)}$, $C_{n}^{(3)}$) being identified as the hub vertex $A_{n+1}$ (resp. $B_{n+1}$, $C_{n+1}$) in $\mathcal{G}_{n+1}$.

%%%%%%%%%%%%%%%%%%%%%%%%%%%%%%%%%%%%%%%%%%%%%%%%%%%%%%%%%%
%% Figure  2
%%%%%%%%%%%%%%%%%%%%%%%%%%%%%%%%%%%%%%%%%%%%%%%%%%%%%%%%%%%
%%\begin{figure*}
%%\begin{center}
%%\includegraphics[width=.70\linewidth]%,trim=120 65 150 520]
%%{SFsimilar.eps}
%%\caption{Another construction of the pseudofractal scale-free web, highlighting its self-similarity.} \label{mergeF}
%%\end{center}
%%\end{figure*}
%
%\begin{figurehere}
%\centerline{
%\includegraphics[width=.90\linewidth]%,trim=120 65 150 520]
%{SFsimilar.eps}
%}
%\caption{Another construction of the pseudofractal scale-free web, highlighting its self-similarity.} \label{mergeF}
%\end{figurehere}
%%%%%%%%%%%%%%%%%%%%%%%%%%%%%%%%%%%%%%%%%%%%%%%%%%%%%%%%%%%

Let $N_n$ denote the number of vertices in $\mathcal{G}_n$. According to the second construction approach of the network, $N_n$ follows the relation $N_{n+1}=3N_n-3$, which under the initial value $N_1=3$ is solved to yield $N_n=(3^{n}+3)/2$.

\subsection{Edge domination number and the number of minimum edge dominating sets}

Note that for any EDS $\chi$ of $\mathcal{G}_n$, there are  three possible states for each of three hub vertices by considering whether its incident edges are in  $\chi$ or not. For the first state, at least one incident edge belongs to  $\chi$, we denote this state by a filled hub $\bullet$. For the second state, all its incident edges do not belong to  $\chi$,  but are  adjacent to some edges in $\chi$.  We denote this state by an empty hub $\bigcirc$. While for the third state, all its incident edges do not belong to  $\chi$,  but only a part  are dominated by other edges in $\chi$.  We denote this state by a cross hub $\otimes$.

Let $\gamma_n$ denote the edge domination number of  $\mathcal{G}_n$. In order to determine $\gamma_n$, we define some intermediate quantities.  First, according to the state of hub vertices, we classify  all the EDSs of $\mathcal{G}_n$  into four classes: $\mathcal{A}_n$, $\mathcal{B}_n$, $\mathcal{C}_n$ and $\mathcal{D}_n$. For each EDS in $\mathcal{A}_n$, there is no filled hub.  For each EDS in  $\mathcal{B}_n$, there is one  and only one  filled hub. While for each EDS in $\mathcal{C}_n$ ($\mathcal{D}_n$),  there are exactly two (three) filled hub vertices. Moreover,  for $\mathcal{A}_n$, $\mathcal{B}_n$, $\mathcal{C}_n$ and $\mathcal{D}_n$, we can further define some subsets of them  with the smallest cardinality: $\mathcal{A}_n^i$, $\mathcal{B}_n^i$, $\mathcal{C}_n^i$ and $\mathcal{D}_n^i$, $i\in \{ 0, 1, 2,3\}$, where $i$ means that there is/are exact $i$ cross hub vertex/vertices in the corresponding EDS. For example, $\mathcal{B}_n^2$ denotes the subset of $\mathcal{B}_n$ such that  for each EDS in $\mathcal{B}_n^2$, it has  exactly  two cross hub vertices and the smallest cardinality compared with other EDSs in $\mathcal{B}_n$. Finally,  Let $a_n^i$,  $b_n^i$, $c_n^i$, and $d_n^i$  represent be the cardinality of $\mathcal{A}_n^i$, $\mathcal{B}_n^i$, $\mathcal{C}_n^i$, $i=0, 1, 2, 3$, and $\mathcal{D}_n^i$, respectively.  By definition, we have the following lemma.
\begin{lemma}\label{Dom01}
For $\mathcal{A}_n^i$, $\mathcal{B}_n^i$, $\mathcal{C}_n^i$ and $\mathcal{D}_n^i$,  $n\geq 1$ and $i\in \{ 0, 1, 2,3\}$, only $\mathcal{B}_n^2$, $\mathcal{C}_n^0$, $\mathcal{C}_n^1$ and $\mathcal{D}_n^0$ are existent. Thus, $\gamma_n = \min\{c_n^0,  d_n^0\}$.
\end{lemma}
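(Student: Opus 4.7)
The plan is to prove the lemma by a systematic case analysis on the hub states, exploiting the fact that the three hubs $A_n,B_n,C_n$ always form a triangle $T_n$ in $\mathcal{G}_n$. Because the only edges of $\mathcal{G}_n$ adjacent to a hub-hub edge $XY$ of $T_n$ are those incident to $X$ or to $Y$, the domination status of the three edges of $T_n$ is entirely controlled by the hub states, and that is what drives the whole argument.

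The first key step is a pairwise rule: if hub $X$ is declared empty, then in particular the edge $XY$ of $T_n$ must be dominated, and the dominating edge of $\chi$ must be incident to $X$ or to $Y$; since $X$ is not filled, no edge of $\chi$ is incident to $X$, so the dominator is incident to $Y$, forcing $Y$ to be filled. Contrapositively, if two hubs are both non-filled, neither of them can be empty, so both must be cross and the edge between them is left without an adjacent $\chi$-edge.

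I would then apply this pairwise rule class by class. For $\mathcal{A}_n$, no hub is filled, so the pairwise rule forces every hub to be cross; but with no filled hub, $\chi$ contains no edge incident to any of $A_n,B_n,C_n$, and all three edges of $T_n$ simultaneously lose every possible dominator and are undominated, violating the domination condition built into the definition of $\mathcal{A}_n$ and killing $\mathcal{A}_n^0,\mathcal{A}_n^1,\mathcal{A}_n^2,\mathcal{A}_n^3$ in one stroke. For $\mathcal{B}_n$ with a single filled hub $A$, the pairwise rule forces the remaining two hubs $B$ and $C$ both to be cross, leaving only $\mathcal{B}_n^2$. For $\mathcal{C}_n$ with two filled hubs the single non-filled hub may be either empty or cross --- the pairwise rule is never triggered --- so exactly $\mathcal{C}_n^0$ and $\mathcal{C}_n^1$ survive. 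For $\mathcal{D}_n$ with all three hubs filled only $\mathcal{D}_n^0$ is possible. Non-emptiness of the four surviving classes would then be verified by explicit constructions on $\mathcal{G}_1$ (one-edge subsets realize $\mathcal{C}_1^0$ and two- or three-edge subsets realize $\mathcal{D}_1^0$) and by lifting to larger $n$ through the self-similar merge of Fig.~\ref{mergeF}.

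Finally, the formula $\gamma_n=\min\{c_n^0,d_n^0\}$ follows because a genuine minimum EDS of $\mathcal{G}_n$ dominates every edge, so at each hub every incident edge is dominated and no hub can be cross; the minimum therefore lies in some class with $i=0$, and the case analysis leaves only $\mathcal{C}_n^0$ and $\mathcal{D}_n^0$ as candidates. The main obstacle I anticipate is the $\mathcal{A}_n^3$ case --- the pairwise rule alone does not eliminate it, so one must invoke the global observation that all three edges of $T_n$ simultaneously lack dominators when no hub is filled --- together with arranging the filled hub(s) in $\mathcal{B}_n^2$ and $\mathcal{C}_n^1$ so that the triangle edges between cross hubs are exactly the undominated ones, which certifies those classes are non-empty and consistent with the paper's cross-hub state.
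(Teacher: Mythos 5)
Your pairwise rule is sound, and the half of the argument that actually yields $\gamma_n=\min\{c_n^0,d_n^0\}$ is fine: a genuine minimum EDS dominates every edge, so no hub is cross, and since the three hubs are mutually adjacent, two non-filled hubs would leave the hub--hub edge between them undominated; hence a MEDS has at least two filled hubs and falls under $\mathcal{C}_n^0$ or $\mathcal{D}_n^0$. (For what it is worth, the paper gives no argument at all for this lemma -- it is asserted ``by definition'' -- so here you are more explicit than the source.)

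The gap is in your elimination of $\mathcal{A}_n^3$, and it is not a technicality: you kill it by invoking ``the domination condition built into the definition of $\mathcal{A}_n$,'' i.e.\ by treating members of $\mathcal{A}_n$ as genuine EDSs in which every edge must be dominated. But that same condition would equally annihilate $\mathcal{B}_n^2$ and $\mathcal{C}_n^1$: a cross hub by definition has an undominated incident edge, and in your own certification of $\mathcal{B}_n^2$ you deliberately leave the triangle edge between the two cross hubs undominated. You cannot apply the strict EDS condition to rule out the all-cross class while applying the relaxed condition (undominated edges tolerated at cross hubs) to keep the one- and two-cross classes. Under the consistent relaxed reading -- the one the paper actually needs, since these cross classes are partial configurations of a copy $\mathcal{G}_n^{(\theta)}$ whose deficient edges are meant to be dominated by edges of neighbouring copies after the merge of Fig.~\ref{mergeF} -- the all-cross state is realizable for $n\geq 3$ (take any set containing no edge incident to a hub and dominating what it can; the leftover undominated edges, including the three hub--hub edges, are all incident to cross hubs). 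So the non-existence of $\mathcal{A}_n^3$ does not follow from domination at all; it has to be argued differently, e.g.\ that such a configuration can never sit inside an optimal configuration of $\mathcal{G}_{n+1}$ (the unshared hub of a copy has no external edges available to repair its deficiencies) or simply that it never enters the recursion, and the paper silently skips this point too. Two smaller inaccuracies: in $\mathcal{C}_n^1$ the cross hub is adjacent to two filled hubs, so its triangle edges are automatically dominated and the undominated edges must be non-triangle edges incident to it (your ``triangle edges between cross hubs'' description does not apply); and realizability of $\mathcal{B}_n^2$ and $\mathcal{C}_n^1$ fails for $n=1$ (every edge of the triangle fills two hubs), so the non-emptiness claims need $n\geq 2$ rather than a lift from $\mathcal{G}_1$.
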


%%%%%%%%%%%%%%%%%%%%%%%%%%%%%%%%%%%%%%%%%%%%%%%%%%%%%%%%%
% Figure 3
%%%%%%%%%%%%%%%%%%%%%%%%%%%%%%%%%%%%%%%%%%%%%%%%%%%%%%%%%%
%\begin{figure*}
%\begin{center}
%\includegraphics[width=0.70\linewidth]%,trim=0 80 0 500]
%{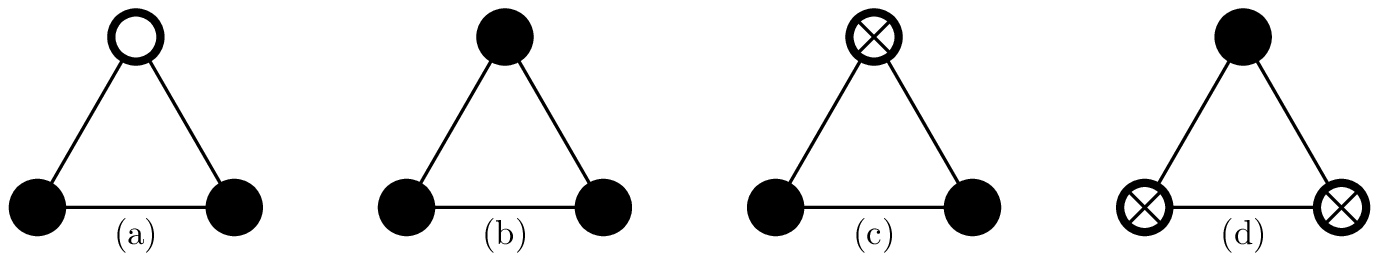}
%\end{center}
%\caption[kurzform]{\label{Demo01} Illustrations for the definitions of
%$C_n^0$, $D_n^0$, $C_n^1$ and $B_n^2$, only showing the three hub vertices. (a), (b), (c) and (d) represent an EDS set belonging to $C_n^0$, $D_n^0$, $C_n^1$ and $B_n^2$, respectively.} %Every filled circle denotes a hub which has at least an edge connected to it is in the edge dominating set, each empty circle means none of its edge is in the edge dominating set but all of them are adjacent to some edges in the edge dominating set, and the cross circle means some of the edges connected to it are not adjacent to any of the edge in the edge dominating set.
%\end{figure*}

\begin{figurehere}
\centerline{
\includegraphics[width=0.90\linewidth]%,trim=0 80 0 500]
{base_1.eps}
}
\caption[kurzform]{\label{Demo01} Illustrations for the definitions of
$C_n^0$, $D_n^0$, $C_n^1$ and $B_n^2$, only showing the three hub vertices. (a), (b), (c) and (d) represent an EDS set belonging to $C_n^0$, $D_n^0$, $C_n^1$ and $B_n^2$, respectively.}
\end{figurehere}
%%%%%%%%%%%%%%%%%%%%%%%%%%%%%%%%%%%%%%%%%%%%%%%%%%%%%%%%%%

Figure~\ref{Demo01} illustrates the   definitions of $\mathcal{C}_n^0$, $\mathcal{D}_n^0$, $\mathcal{C}_n^1$ and $\mathcal{B}_n^2$.

Having reduced the problem of determining $\gamma_n$ to computing $c_n^0$ and $d_n^0 $, we now determine these two quantities. For this purpose, we alternatively evaluate the  four quantities $c_n^0$, $d_n^0 $,  $c_n^1$ and $b_n^2$,   by using the self-similar property of the pseudofractal scale-free web, since they themselves can be computed recursively.

%%%%%%%%%%%%%%%%%%%%%%%%%%%%%%%%%%%%%%%%%%%%%%%%%%%%%%%%%
% Figure  4
%%%%%%%%%%%%%%%%%%%%%%%%%%%%%%%%%%%%%%%%%%%%%%%%%%%%%%%%%%
\begin{figure*}[htbp]
\centering
\includegraphics[width=0.75\linewidth]{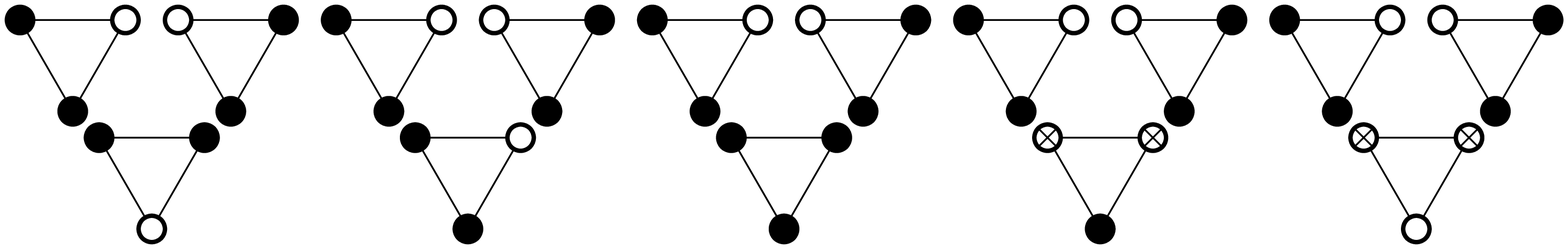}%,trim=0 80 0 500
\caption{\label{Theta0}Illustration of all possible configurations of EDSs in $\mathcal{C}_{n+1}^0$ for $\mathcal{G}_{n+1}$. Only the hub vertices of $\mathcal{G}_{n}^{(\theta)}$, $\theta=1,2,3$, are shown.  Note that here we only illustrate the  EDSs, each of which includes two filled hub vertices $A_{n+1}$ and $C_{n+1}$, and one empty hub vertex  $B_{n+1}$.  Similarly, we can illustrate those EDSs, each  including two filled hub vertices  $B_{n+1}$ and $C_{n+1}$ (resp. $A_{n+1}$ and $B_{n+1}$), and one empty hub vertex  $A_{n+1}$ (resp. $C_{n+1}$) .}
\end{figure*}
%%%%%%%%%%%%%%%%%%%%%%%%%%%%%%%%%%%%%%%%%%%%%%%%%%%%%%%%%%

\begin{lemma}\label{edgDom02}
For two successive generation networks $\mathcal{G}_n$ and $\mathcal{G}_{n+1}$, $n\geq1$,
\begin{equation}\label{Dom02}
c_{n+1}^0 = \min\{3c_n^0,2c_n^0+d_n^0,2c_n^0+b_n^2,2c_n^0+c_n^1\},
\end{equation}
\begin{align}\label{Dom03}
d_{n+1}^0 &= \min\{3c_n^0,2c_n^0+d_n^0,2c_n^0+b_n^2,2c_n^0+c_n^1,3c_n^1,\nonumber\\
&\quad c_n^0+2d_n^0,c_n^0+2c_n^1,c_n^0+d_n^0+c_n^1,2d_n^0+c_n^1,\nonumber\\
&\quad c_n^0+c_n^1+b_n^2,3d_n^0,2d_n^0+b_n^2,c_n^0+d_n^0+b_n^2,\nonumber\\
&\quad d_n^0+b_n^2+c_n^1,d_n^0+2c_n^1\},
\end{align}
\begin{align}\label{Dom04}
c_{n+1}^1 &= \min\{3c_n^1,2c_n^1+c_n^0,2c_n^1+d_n^0,c_n^1+2c_n^0,\nonumber\\
&\quad c_n^1+c_n^0+d_n^0,b_n^2+c_n^0+c_n^1,b_n^2+c_n^0+d_n^0,\nonumber\\
&\quad b_n^2+2c_n^0,2b_n^2+c_n^0,2b_n^2+d_n^0,b_n^2+2c_n^1,\nonumber\\
&\quad b_n^2+c_n^1+d_n^0\},
\end{align}
\begin{equation}\label{Dom05}
b_{n+1}^2 = \min\{2c_n^0+b_n^2,c_n^0+b_n^2+c_n^1,2b_n^2+c_n^1,b_n^2+2c_n^1\}.
\end{equation}
\end{lemma}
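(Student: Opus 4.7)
The proof strategy is to exploit the self-similar construction of $\mathcal{G}_{n+1}$ as the gluing of three copies $\mathcal{G}_n^{(1)}, \mathcal{G}_n^{(2)}, \mathcal{G}_n^{(3)}$ of $\mathcal{G}_n$ identified via $A_{n+1}=A_n^{(1)}=B_n^{(3)}$, $B_{n+1}=C_n^{(1)}=B_n^{(2)}$, and $C_{n+1}=A_n^{(2)}=C_n^{(3)}$. Any edge subset $\chi$ of $\mathcal{G}_{n+1}$ splits uniquely as the disjoint union of pieces $\chi^{(\theta)}$ on the three copies, with $|\chi|=|\chi^{(1)}|+|\chi^{(2)}|+|\chi^{(3)}|$. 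By Lemma~\ref{Dom01}, any piece of a minimum-cardinality element of one of the target classes belongs to one of the four extant classes $\mathcal{B}_n^2$, $\mathcal{C}_n^0$, $\mathcal{C}_n^1$, $\mathcal{D}_n^0$, whose cardinalities are $b_n^2$, $c_n^0$, $c_n^1$, $d_n^0$ respectively, so the minimization reduces to a minimization over triples of these classes.

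Two principles govern how pieces assemble into a valid element of the target class in $\mathcal{G}_{n+1}$. First, the merged-hub law: a hub of $\mathcal{G}_{n+1}$ is filled if at least one of its two constituent copy-hubs is filled, empty if both constituents are empty, and cross otherwise. Second, the three free copy-hubs $B_n^{(1)}, C_n^{(2)}, A_n^{(3)}$ are not identified with any vertex from another copy, so they cannot be cross in their local piece without leaving internal edges undominated. With these rules in hand, I would prove each of (\ref{Dom02})--(\ref{Dom05}) by the same template: fix (up to the evident hub symmetry) the filled/empty/cross pattern of the target class on $A_{n+1}, B_{n+1}, C_{n+1}$, translate it via the merged-hub law to constraints on the nine copy-hubs, and enumerate all consistent triples of (class, hub-orientation) for the three copies in which any cross at a merged copy-hub is compensated by a filled constituent from the other side. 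Summing the three cardinalities and minimizing over all admissible triples yields the right-hand side. For instance, for $c_{n+1}^0$ the two copies meeting the empty merged hub are forced into $\mathcal{C}_n^0$ (the unique class with an empty hub), while the remaining copy can be any of the four classes provided its free hub is not cross, which produces exactly the four terms of (\ref{Dom02}).

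The hardest case is (\ref{Dom03}), the recurrence for $d_{n+1}^0$: because every merged hub is filled, the cross-compensation constraint is trivially satisfied at every merge and each copy has maximal flexibility in class and orientation, so it is only the joint combinatorics of the three copies that cuts the list down to the fifteen minimal-cost configurations appearing in the formula. The delicate bookkeeping here is to eliminate duplicate triples arising from the $S_3$ symmetry that permutes the hubs of $\mathcal{G}_{n+1}$ while certifying that no admissible configuration is omitted, a task supported by a figure analogous to Fig.~\ref{Theta0} for each target class. The recurrences (\ref{Dom04}) and (\ref{Dom05}) follow the same template but are shorter because each required cross hub in the target pins down both its constituent copy-hubs to non-filled states with at least one cross, sharply pruning the viable class assignments to the twelve terms in (\ref{Dom04}) and the four terms in (\ref{Dom05}).
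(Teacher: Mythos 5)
Your proposal follows essentially the same route as the paper: decompose $\mathcal{G}_{n+1}$ into the three copies of $\mathcal{G}_n$ glued at hub vertices, classify each piece by the hub-state classes of Lemma~\ref{Dom01}, and derive each of (\ref{Dom02})--(\ref{Dom05}) by exhaustively enumerating the admissible (class, orientation) triples and taking the minimum, which is precisely what the paper does graphically via Figs.~\ref{Theta0}--\ref{Theta3}. Your explicit merged-hub and cross-compensation rules are a verbal rendering of those figures, so the argument is correct at the same level of rigor as the paper's own proof.
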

\begin{proof}
By definition, $b_{n+1}^2$, $c_{n+1}^0$,  $c_{n+1}^1$, and $d_{n+1}^0 $ are   the cardinality of sets $\mathcal{B}_{n+1}^2$, $\mathcal{C}_{n+1}^0$, $\mathcal{C}_{n+1}^1$, and $\mathcal{D}_{n+1}^0$, respectively. Below, we will show that each of these four sets $\mathcal{B}_{n+1}^2$, $\mathcal{C}_{n+1}^0$, $\mathcal{C}_{n+1}^1$ and $\mathcal{D}_{n+1}^0$ can be constructed iteratively from $\mathcal{B}_n^2$, $\mathcal{C}_n^0$, $\mathcal{C}_n^1$ and $\mathcal{D}_n^0$.   Then, we express $b_{n+1}^2$, $c_{n+1}^0$, $c_{n+1}^1$ and $d_{n+1}^0$ in terms of  $b_n^2$, $c_n^0$, $c_n^1$ and $d_n^0$.

We first consider Eq.~\eqref{Dom02}, which can be proved graphically.

Note that $\mathcal{G}_{n+1}$ is composed of three copies of $\mathcal{G}_n$.  Figure~\ref{Theta0} illustrates   all possible configurations of  EDSs in $\mathcal{C}_{n+1}^0$ for $\mathcal{G}_{n+1}$. From Fig~\ref{Theta0}, we obtain
\begin{equation*}
c_{n+1}^0 = \min\{3c_n^0,2c_n^0+d_n^0,2c_n^0+b_n^2,2c_n^0+c_n^1\}.
\end{equation*}

For Eqs.~\eqref{Dom03},~\eqref{Dom04}, and~\eqref{Dom05}, they can be proved similarly. In Figs.~\ref{Theta1},~\ref{Theta2}, and~\ref{Theta3}, we provide graphical representations of Eqs.~\eqref{Dom03},~\eqref{Dom04}, and~\eqref{Dom05}, respectively.
\end{proof}
%we don't know whether they are in the domination set. Therefore, We have to consider all configurations in each situation. there are still three hub vertices of these copies.

%As we mentioned above, there are four different situations for configuration of hub vertices. However, except the new hub vertices of $G_{n+1}$, there are still three hub vertices of these copies. For these three vertices, we don't know whether they are in the domination set. Therefore, We have to consider all configurations in each situation.\par

%First, all three hub vertices of $G_{n+1}$ are not in domination set. Considering symmetry and rotation, there are four intrinsic different configurations in showed in the Figure1.

%Similarly, we can enumerate all configurations of other three situations.

%%%%%%%%%%%%%%%%%%%%%%%%%%%%%%%%%%%%%%%%%%%%%%%%%%%%%%%%%
% Figure  5
%%%%%%%%%%%%%%%%%%%%%%%%%%%%%%%%%%%%%%%%%%%%%%%%%%%%%%%%%%
\begin{figure*}[htbp]
\centering
\includegraphics[width=0.85\linewidth]{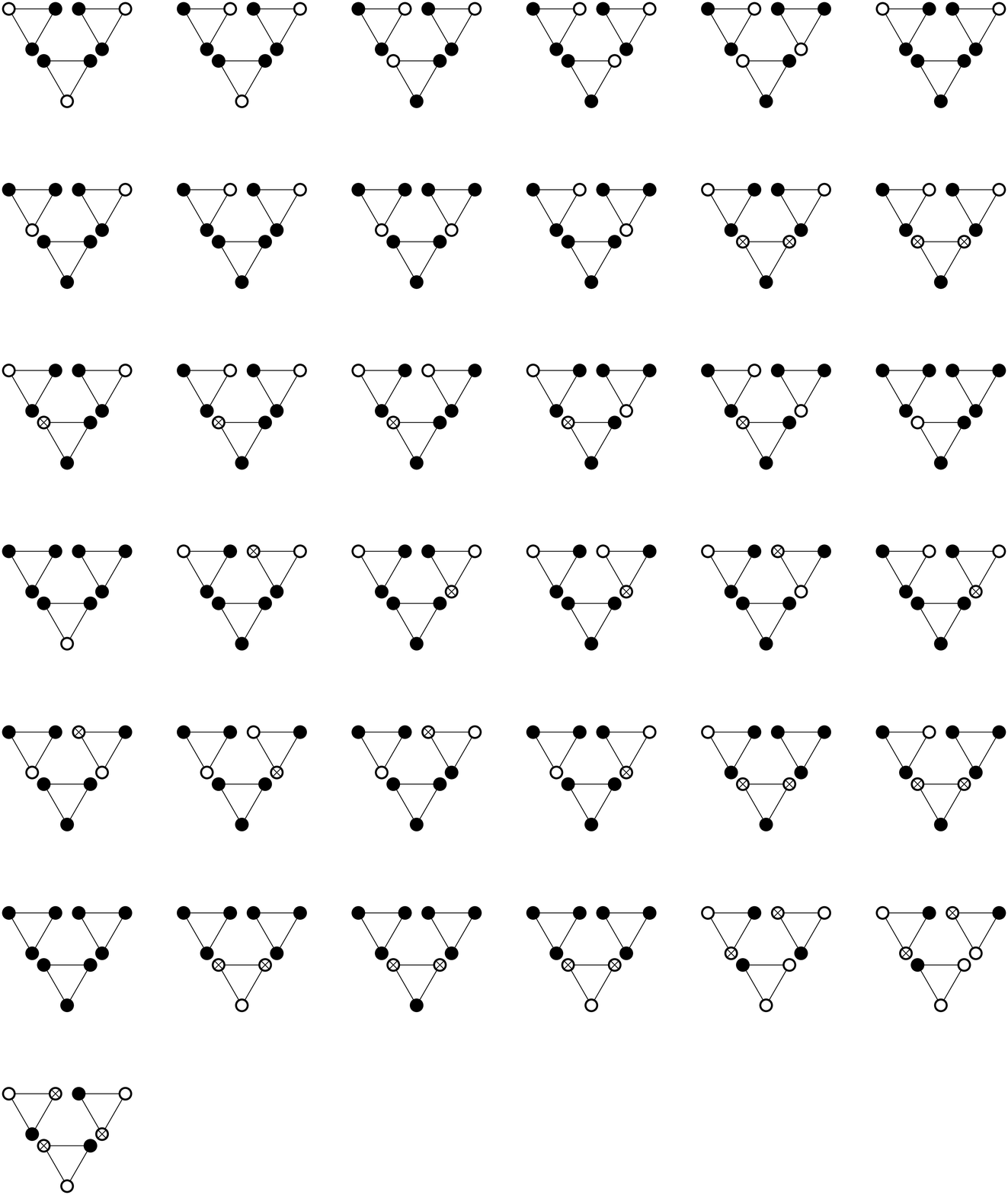}
\caption{\label{Theta1}Illustration of all possible configurations of EDSs in $\mathcal{D}_{n+1}^0$ for $\mathcal{G}_{n+1}$. Note that  we omit those configures that are  rotationally equivalent to those illustrated here.}
\end{figure*}
%%%%%%%%%%%%%%%%%%%%%%%%%%%%%%%%%%%%%%%%%%%%%%%%%%%%%%%%%%

%%%%%%%%%%%%%%%%%%%%%%%%%%%%%%%%%%%%%%%%%%%%%%%%%%%%%%%%%
% Figure  6
%%%%%%%%%%%%%%%%%%%%%%%%%%%%%%%%%%%%%%%%%%%%%%%%%%%%%%%%%%
\begin{figure*}[htbp]
\centering
\includegraphics[width=0.75\linewidth]{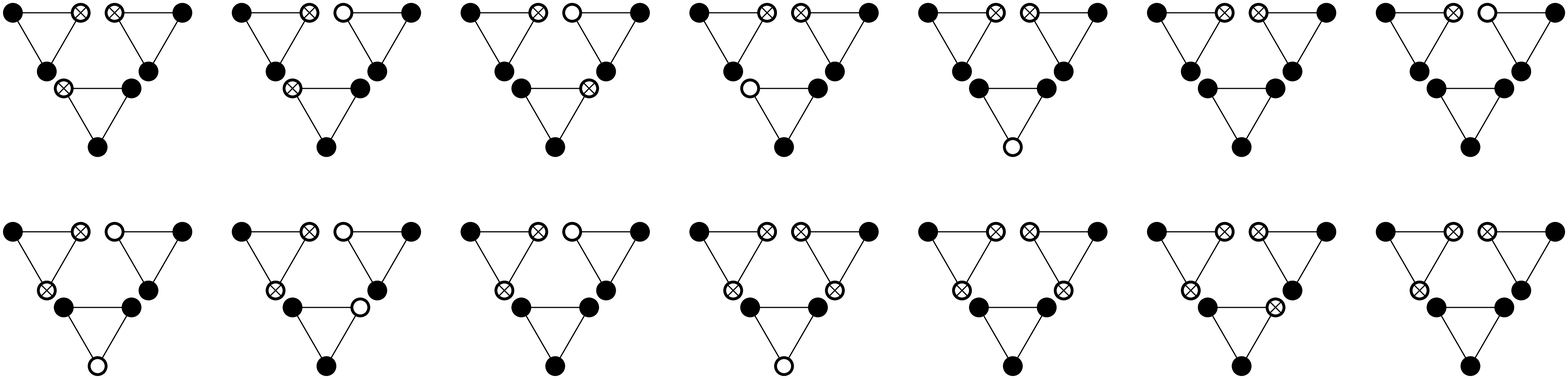}
\caption{\label{Theta2}Illustration of all possible configurations of EDSs in $\mathcal{C}_{n+1}^1$ for $\mathcal{G}_{n+1}$. Note that  we omit those configures that are  rotationally equivalent to those illustrated here.}
\end{figure*}
%%%%%%%%%%%%%%%%%%%%%%%%%%%%%%%%%%%%%%%%%%%%%%%%%%%%%%%%%%

%%%%%%%%%%%%%%%%%%%%%%%%%%%%%%%%%%%%%%%%%%%%%%%%%%%%%%%%%
% Figure  7
%%%%%%%%%%%%%%%%%%%%%%%%%%%%%%%%%%%%%%%%%%%%%%%%%%%%%%%%%%
\begin{figure*}[htbp]
\centering
\includegraphics[width=0.7\linewidth]{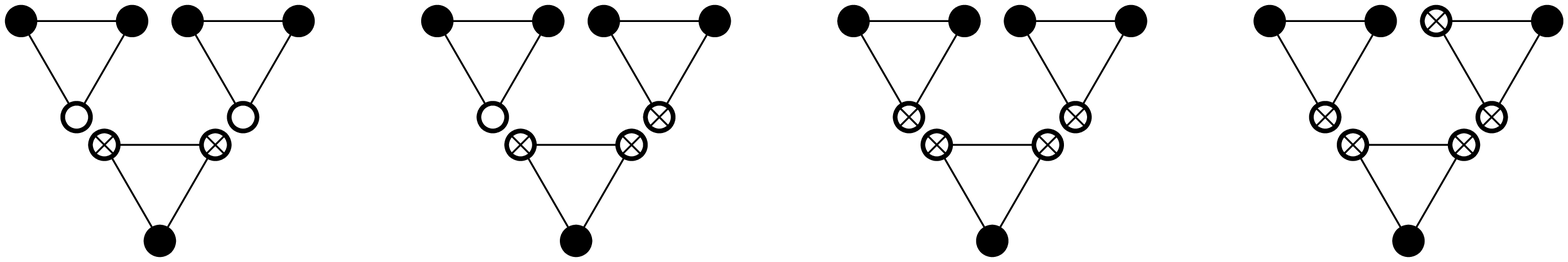}
\caption{\label{Theta3}Illustration of all possible configurations of EDSs in $\mathcal{B}_{n+1}^2$ for $\mathcal{G}_{n+1}$. Note that  we omit those configures that are  rotationally equivalent to those illustrated here.}
\end{figure*}
%%%%%%%%%%%%%%%%%%%%%%%%%%%%%%%%%%%%%%%%%%%%%%%%%%%%%%%%%%

\begin{lemma}\label{Dom06}
For network $\mathcal{G}_n$, $n\geq 3$, $c_n^0 > b_n^2 > c_n^1 = d_n^0$.
\end{lemma}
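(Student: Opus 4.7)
The plan is to prove the chain $c_n^0 > b_n^2 > c_n^1 = d_n^0$ by induction on $n$, starting from $n=3$. For the base case, I would compute $c_2^0, d_2^0, c_2^1, b_2^2$ by direct inspection of the 9-edge graph $\mathcal{G}_2$, then apply the four recursions of Lemma~\ref{edgDom02} once to obtain $c_3^0, d_3^0, c_3^1, b_3^2$ and verify all three required relations by arithmetic.

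For the inductive step, suppose $c_n^0 > b_n^2 > c_n^1 = d_n^0$ for some $n \ge 3$. Under this hypothesis the multipliers $\{c_n^0, b_n^2, c_n^1, d_n^0\}$ are totally ordered, so each minimum in Lemma~\ref{edgDom02} is attained at an identifiable term. In Eq.~\eqref{Dom02}, the smallest coefficient is $c_n^1 = d_n^0$, giving $c_{n+1}^0 = 2c_n^0 + c_n^1$. In Eq.~\eqref{Dom05}, the chain $c_n^0 > b_n^2 > c_n^1$ makes the last listed term the minimum, so $b_{n+1}^2 = b_n^2 + 2c_n^1$. In Eq.~\eqref{Dom04}, every candidate other than $3c_n^1$ (and the equal term $2c_n^1 + d_n^0$) contains a factor strictly larger than $c_n^1$, so $c_{n+1}^1 = 3c_n^1$. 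In Eq.~\eqref{Dom03}, the fifteen candidates are bounded below by $3c_n^1 = 3d_n^0$, with equality attained at $3c_n^1$, $3d_n^0$, $d_n^0 + 2c_n^1$ and $2d_n^0 + c_n^1$, so $d_{n+1}^0 = 3c_n^1$ as well.

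Having these closed forms, I would read off the three inequalities at level $n+1$: $c_{n+1}^1 = d_{n+1}^0 = 3c_n^1$ gives the required equality; $b_{n+1}^2 - c_{n+1}^1 = b_n^2 - c_n^1 > 0$ follows directly from the inductive hypothesis; and $c_{n+1}^0 - b_{n+1}^2 = 2c_n^0 - b_n^2 - c_n^1 > 0$ because the inductive hypothesis gives both $c_n^0 > b_n^2$ and $c_n^0 > c_n^1$, hence $2c_n^0 > b_n^2 + c_n^1$.

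The main obstacle is really the base case: the strict separation $b_3^2 > c_3^1$ has to be obtained by hand before the induction can propagate it, since the recursive step only preserves, rather than creates, strictness of this particular gap. A secondary and more routine obstacle is systematically dismissing the fifteen terms of Eq.~\eqref{Dom03} to confirm that $3c_n^1$ is the minimum; this amounts to term-by-term comparison against the IH-supplied ordering $c_n^1 = d_n^0 < b_n^2 < c_n^0$, and each term that differs from $3c_n^1$ exceeds it by a positive combination of the gaps $c_n^0 - c_n^1$ and $b_n^2 - c_n^1$.
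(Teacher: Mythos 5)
Your inductive step is exactly the paper's: under the hypothesis $c_n^0>b_n^2>c_n^1=d_n^0$ the four minima of Lemma~\ref{edgDom02} collapse to $c_{n+1}^0=2c_n^0+c_n^1$, $b_{n+1}^2=b_n^2+2c_n^1$, $c_{n+1}^1=3c_n^1$ and $d_{n+1}^0=3d_n^0$, and the ordering at level $n+1$ then follows from the same three comparisons you give. The only divergence is the base case: the paper anchors the induction by inspecting $\mathcal{G}_3$ directly ($c_3^0=5$, $b_3^2=4$, $c_3^1=d_3^0=3$), whereas you propose to compute the four quantities for $\mathcal{G}_2$ and push them once through the recursion. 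I would not rely on that shortcut: the statement and the surrounding classification (Lemma~\ref{Dom01}, the configuration enumeration behind Lemma~\ref{edgDom02}) are only asserted for the regime $n\geq 3$, and $\mathcal{G}_2$ is degenerate (every edge is incident to a hub, and the classes with cross hubs are only marginally populated), so the level-$2$ values and the first application of the recursion are precisely where the strict gap $b_3^2>c_3^1$ -- which, as you note, the recursion can only preserve, never create -- is at risk of coming out wrong. Verify the $n=3$ values by hand as the paper does; with that anchor, the rest of your argument coincides with the paper's proof.
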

\begin{proof}
We will prove this lemma by mathematical induction on $n$. For $n=3$, we  obtain $c_3^0=5$, $b_3^2=4$, $d_3^0=3$ and $c_3^1=3$ by hand. Thus, the basis step holds immediately.

Suppose that the statement  holds for $n=t$, $t\geq 3$. Then, from Eq.~\eqref{Dom05}, $b_{t+1}^2 = \min\{2c_t^0+b_t^2,c_t^0+b_t^2+c_t^1,2b_t^2+c_t^1,b_t^2+2c_t^1\}$. By induction hypothesis, we have
\begin{equation}\label{Fgamma01}
b_{t+1}^2 = 2c_t^1 + b_t^2.
\end{equation}
Analogously, we  obtain the following relations:
\begin{equation}\label{Fgamma02}
c_{t+1}^1 = 3c_t^1,
\end{equation}
\begin{equation}\label{Fgamma03}
d_{t+1}^0 = 3d_t^0,
\end{equation}
\begin{equation}\label{Fgamma04}
c_{t+1}^0 = 2c_t^0 + c_t^1.
\end{equation}
Comparing the above-obtained  Eqs.~(\ref{Fgamma01}-\ref{Fgamma04}) and using the induction hypothesis $c_t^0 > b_t^2 > c_t^1 = d_t^0$ yields $c_{t+1}^0 > b_{t+1}^2 > c_{t+1}^1 = d_{t+1}^0$. Therefore, the lemma is true for $n=t+1$.

This completes the proof.
\end{proof}

%Using two lemmas above, we can easily calculate the domination number of pseudofractal scale-free network.
\begin{theorem}\label{SFdomN}
The edge domination number of network $\mathcal{G}_n$, $n\geq3$, is
\begin{equation}\label{Fgamma04x}
\gamma_n = 3^{n-2}.
\end{equation}
\end{theorem}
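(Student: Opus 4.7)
The plan is to chain together the three lemmas already established. By Lemma~\ref{Dom01}, the edge domination number satisfies $\gamma_n = \min\{c_n^0, d_n^0\}$, so determining $\gamma_n$ reduces to comparing $c_n^0$ and $d_n^0$ and then computing the smaller of the two in closed form.

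First I would invoke Lemma~\ref{Dom06}, which for $n \geq 3$ gives the strict chain $c_n^0 > b_n^2 > c_n^1 = d_n^0$. In particular $d_n^0 < c_n^0$, so the minimum in Lemma~\ref{Dom01} is attained by $d_n^0$, and hence $\gamma_n = d_n^0$ for all $n \geq 3$.

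Next I would extract the explicit recursion for $d_n^0$ that was already derived inside the proof of Lemma~\ref{Dom06}. Under the induction hypothesis $c_t^0 > b_t^2 > c_t^1 = d_t^0$, the sixteen-way minimum defining $d_{t+1}^0$ in Eq.~\eqref{Dom03} collapses to $d_{t+1}^0 = 3 d_t^0$ (this is precisely Eq.~(\ref{Fgamma03})). The base case $d_3^0 = 3$ was verified by hand in the proof of Lemma~\ref{Dom06}. Solving this first-order linear recursion gives
\begin{equation*}
d_n^0 = 3^{n-3} \cdot d_3^0 = 3^{n-3} \cdot 3 = 3^{n-2}
\end{equation*}
for all $n \geq 3$, and substituting back yields $\gamma_n = 3^{n-2}$, which is the claimed formula.

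There is no real obstacle here: all the combinatorial work — identifying which subsets of EDSs are nonempty, producing the recursive system, and proving the size ordering of the four quantities — has been done in Lemmas~\ref{Dom01}, \ref{edgDom02}, and \ref{Dom06}. The theorem is essentially a corollary obtained by reading off $d_n^0$ from the simplified recursion and noting that it dominates $c_n^0$ from below. The only point worth double-checking is that the base case $n=3$ used in Lemma~\ref{Dom06} indeed gives $d_3^0 = 3$, which matches $3^{3-2} = 3$, so the formula is consistent at the starting generation.
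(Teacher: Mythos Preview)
Your proposal is correct and follows essentially the same route as the paper: identify $\gamma_n$ with $d_n^0$ via Lemmas~\ref{Dom01} and~\ref{Dom06}, then use the simplified recursion $d_{n+1}^0=3d_n^0$ from Eq.~\eqref{Fgamma03} together with the base value $d_3^0=3$ to obtain $\gamma_n=3^{n-2}$. The only trivial slip is that the minimum in Eq.~\eqref{Dom03} has fifteen terms rather than sixteen, which does not affect the argument.
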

\begin{proof}
Lemma~\ref{edgDom02}, together with Eq.~\eqref{Fgamma03} leads to
\begin{equation}\label{Fgamma05}
\gamma_{n+1} = d_{n+1}^0 = 3d_n^0 = 3\gamma_n ,
\end{equation}
which, under the initial condition $\gamma_{3}=3$, is solved to give the result.
\end{proof}

Theorem~\ref{SFdomN}, particularly Eq.~\eqref{Fgamma05}, means that for each MEDS $\chi$ of $\mathcal{G}_n$,   the three hub vertices are dominated by edges in $\chi$.  This can be easily understood. Since  hub vertices are incident to  many edges, if any of its incident edges is included in an EDS, other incident edges are excluded.
\begin{lemma}
For the three  sets $\mathcal{C}_n^1$, $\mathcal{B}_n^2$ and $\mathcal{C}_n^0$, $n\geq3$, the cardinality of any  element in them is
\begin{equation}\label{Fgamma06}
c_n^1 = 3^{n-2},
\end{equation}
\begin{equation}\label{Fgamma07}
b_n^2 = 3^{n-2}+1,
\end{equation}
and
\begin{equation}\label{Fgamma08}
c_n^0 = 3^{n-2}+2^{n-2},
\end{equation}
respectively.
\end{lemma}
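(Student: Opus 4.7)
The plan is to prove the three closed-form identities simultaneously by induction on $n$, building directly on work already completed in the proof of Lemma~\ref{Dom06}. Under the ordering $c_t^0 > b_t^2 > c_t^1 = d_t^0$ that Lemma~\ref{Dom06} asserts for every $t \geq 3$, the sixteen-term minima appearing in Lemma~\ref{edgDom02} collapse to the three clean recursions
$$
c_{t+1}^1 = 3c_t^1,
\qquad
b_{t+1}^2 = 2c_t^1 + b_t^2,
\qquad
c_{t+1}^0 = 2c_t^0 + c_t^1,
$$
i.e., exactly equations~(\ref{Fgamma01}), (\ref{Fgamma02}) and (\ref{Fgamma04}). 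These recurrences are therefore available at every level of the induction without any further case analysis.

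For the base case $n = 3$, I would appeal to the explicit values $c_3^1 = 3$, $b_3^2 = 4$, $c_3^0 = 5$, which were recorded inside the proof of Lemma~\ref{Dom06} and can be confirmed by inspection since $\mathcal{G}_3$ is small enough to enumerate. Each value matches the proposed formula: $3^{3-2} = 3$, $3^{3-2}+1 = 4$, and $3^{3-2}+2^{3-2} = 5$. The inductive step then reduces to a one-line substitution for each quantity. Assuming the formulas hold at level $n$, I would compute
$$
c_{n+1}^1 = 3 \cdot 3^{n-2} = 3^{n-1},
$$
$$
b_{n+1}^2 = 2 \cdot 3^{n-2} + (3^{n-2}+1) = 3^{n-1} + 1,
$$
$$
c_{n+1}^0 = 2(3^{n-2}+2^{n-2}) + 3^{n-2} = 3^{n-1} + 2^{n-1},
$$
matching the desired closed forms at level $n+1$.

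There is no significant obstacle left at this stage: the combinatorial heavy lifting---identifying which candidate configurations achieve the minima in Lemma~\ref{edgDom02} and establishing the ordering of the four cardinalities---was already carried out in the preceding lemmas. The only points that require care in the write-up are (i) explicitly citing equations~(\ref{Fgamma01})--(\ref{Fgamma04}) when collapsing the minima, so the reader sees that no hidden case has been dropped, and (ii) recording the base-case numerics in a self-contained way. Because the three recurrences decouple (the update for $c_n^1$ depends on $c_n^1$ alone; that for $b_n^2$ depends on $b_n^2$ and $c_n^1$; that for $c_n^0$ depends on $c_n^0$ and $c_n^1$), the induction can be carried out in the order $c_n^1 \to b_n^2 \to c_n^0$ with no joint argument needed beyond sharing the common base index $n = 3$.
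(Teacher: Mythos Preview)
Your proposal is correct and follows essentially the same route as the paper: both arguments rely on the collapsed recursions~(\ref{Fgamma01}), (\ref{Fgamma02}), (\ref{Fgamma04}) obtained from Lemmas~\ref{edgDom02} and~\ref{Dom06}, together with the base values at $n=3$, to pin down the closed forms. The paper's own proof is extremely terse---it simply invokes those lemmas and the identity $c_n^1=d_n^0=3^{n-2}$ coming from Theorem~\ref{SFdomN}---whereas you spell out the induction explicitly, but there is no substantive difference in method.
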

\begin{proof}
By Lemma~\ref{edgDom02}, we have $c_n^1=d_n^0= 3^{n-2}$, which prove Eq.~\eqref{Fgamma06}.

In an analogous way, we can prove Eqs.~\eqref{Fgamma07} and~\eqref{Fgamma08} by exploiting Lemmas~\ref{edgDom02} and~\ref{Dom06}.
\end{proof}

%\begin{theorem}
%For network $\mathcal{G}_n$,  $n \geq 3$, there is a unique minimum dominating set.
%\end{theorem}

%\begin{proof}
%Equation~\eqref{Fgamma05} and Fig.~\ref{Theta3} shows that for $n \geq 3$ any minimum dominating set of $\mathcal{G}_{n+1}$ is in fact the union of minimum dominating sets, $\Theta_n^3$, of the three replicas of $\mathcal{G}_{n}$ (i.e. $\mathcal{G}_{n}^{1}$, $\mathcal{G}_{n}^{2}$, and $\mathcal{G}_{n}^{3}$) forming $\mathcal{G}_{n+1}$, with each pair of their identified hub vertices being counted only once. Thus, any minimum dominating set of $\mathcal{G}_{n+1}$ is determined by those of $\mathcal{G}_{n}^{1}$, $\mathcal{G}_{n}^{2}$, and $\mathcal{G}_{n}^{3}$.  Since the minimum dominating set of $\mathcal{G}_3$ is unique, there is unique  minimum dominating set for $G_n$ when $n \geq 3$. Moreover, it is easy to see that the unique dominating set of $\mathcal{G}_n$, $n \geq 3$, is actually the set of all vertices of $\mathcal{G}_{n-2}$.
%\end{proof}

\subsection{The number of minimum edge dominating sets}

Let $x_n$ denote the number of MEDSs of the pseudofractal scale-free network $\mathcal{G}_{n}$, and let $y_n$ denote the number of EDSs in  $\mathcal{C}_{n}^1$.

\begin{theorem}\label{NumEDSs}
For $n\geq 3$, the two quantities $x_n$ and $y_n$ can be obtained recursively according to the following relations.
\begin{equation}\label{SFset02}
x_{n+1} = x_n^3+x_n^2y_n+2x_ny_n^2+y_n^3,
\end{equation}
\begin{equation}\label{SFset03}
y_{n+1} = x_ny_n^2+y_n^3,
\end{equation}
with the initial condition $x_3=1$ and $y_3=1$.
\end{theorem}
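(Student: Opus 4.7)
The plan is to prove both recursions by simultaneous induction on $n$, using the self-similar decomposition $\mathcal{G}_{n+1} = \mathcal{G}_n^{(1)} \cup \mathcal{G}_n^{(2)} \cup \mathcal{G}_n^{(3)}$ with pairwise hub identifications at $A_{n+1}, B_{n+1}, C_{n+1}$ that was already exploited in Lemma~\ref{edgDom02}. The starting observation is that for any MEDS $\chi$ of $\mathcal{G}_{n+1}$, we have $|\chi| = \gamma_{n+1} = 3\gamma_n$, so each restriction $\chi^{(\theta)} = \chi \cap E(\mathcal{G}_n^{(\theta)})$ must contain exactly $\gamma_n = 3^{n-2}$ edges. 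By Lemma~\ref{Dom06}, the only ``minimum'' classes achieving this per-copy size are $\mathcal{D}_n^0$ (strict MEDSs of $\mathcal{G}_n$, counted by $x_n$) and $\mathcal{C}_n^1$ (near-MEDSs with a single cross hub, counted by $y_n$). Hence every MEDS of $\mathcal{G}_{n+1}$ is specified by a choice of type $T_\theta \in \{D, C\}$ for each copy together with a specific edge subset in the corresponding class.

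The $x_{n+1}$ recursion will then be obtained by grouping MEDSs by the number $k$ of copies of type $C$. The central combinatorial fact I would use is the merger rule at each merged hub: it is filled exactly when at least one of its two contributing copy-hubs is filled, and a cross copy-hub has its undominated incident edges covered precisely when the other contributor is filled. Two corollary constraints any valid configuration must satisfy are: (a) a cross hub cannot sit at a copy's non-shared hub, since there is no second copy to cover its undominated edges; and (b) both contributors to the same merged hub cannot simultaneously be cross. A case enumeration over $k = 0, 1, 2, 3$, paralleling the graphical reasoning behind Figures~\ref{Theta0}--\ref{Theta3}, should then yield the four terms $x_n^3$, $x_n^2 y_n$, $2 x_n y_n^2$, and $y_n^3$ of Eq.~(\ref{SFset02}).

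The recursion for $y_{n+1}$ is proved analogously, with target class $\mathcal{C}_{n+1}^1$: exactly one merged hub is cross (its two contributors are both cross) and the other two are filled. This forces the two copies meeting at the cross merged hub to be of type $C$ with cross at the matching shared position (contributing a factor of $y_n^2$), while the third copy is either of type $D$ or of type $C$ with an admissible cross, producing the two summands $x_n y_n^2 + y_n^3$ of Eq.~(\ref{SFset03}). The base cases $x_3 = y_3 = 1$ are checked by direct enumeration in $\mathcal{G}_3$. I expect the main obstacle to be the case analysis for $x_{n+1}$: recovering the coefficient $2$ on the $x_n y_n^2$ term requires careful tracking of which pairs of cross positions around the ``merger triangle'' of copies are mutually consistent, and verifying that the enumeration is simultaneously exhaustive and non-redundant.
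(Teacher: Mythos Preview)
Your proposal is correct and follows essentially the same approach as the paper. The paper's own proof is extremely terse—it simply invokes Lemma~\ref{Dom06}, the graphical enumeration of configurations in Fig.~\ref{Theta1} (and Fig.~\ref{Theta2} for $y_{n+1}$), and the rotational symmetry of $\mathcal{G}_{n+1}$; your plan to force each restriction $\chi^{(\theta)}$ into $\mathcal{D}_n^0$ or $\mathcal{C}_n^1$ via the cardinality identity $|\chi|=3\gamma_n$ and then enumerate compatible hub states across the three copies is exactly that argument spelled out in words rather than pictures.
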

\begin{proof}
We first prove Eq.~\eqref{SFset02}. Since $x_n$ is in fact is the number of MEDSs for the pseudofractal scale-free web $\mathcal{G}_n$, it can be obtained by enumerating all possible configurations of MEDSs for $\mathcal{G}_n$. By using Lemma~\ref {Dom06}, Fig.~\ref {Theta1}, and the rotational symmetry of the graph, we obtain Eq.~\eqref{SFset02}.

Similarly, we can prove the Eq.~\eqref {SFset03}.
\end{proof}

%We can know from Eqs.~\eqref{SFset02} and Eqs.~\eqref{SFset03} that the number of the edge domination sets grows exponentially with $n$.
%Since the dominating scheme for $S_{n+1}$ is given by three copies of $S_n$, we can obtain the recursive equation if we enumerate all possible configurations of minimum domination set for $S_n$.

%Applying Eqs.~\eqref{SFset02}-\eqref{SFset03}, the values of $x_n$, $y_n$, $z_n$ and $w_n$ can be recursively obtained for small $n$ as listed in Table~\ref{SetNoo}, which shows that these quantities grow exponentially with $n$.

\section{EDGE DOMINATION NUMBER AND THE NUMBER OF MINIMUM EDGE DOMINATING SETS IN SIERPI\'NSKI GRAPH}

In this section, we address the edge domination number and the number of MEDSs in the Sierpi\'nski graph.

\subsection{Construction of Sierpi\'nski graph}

The Sierpinski graph is also created in an iterative approach. We use  $\mathcal{S}_n$, $n\geq 1$, to represent the $n$-generation graph. Then the Sierpi\'nski graph is generated as follows. Initially ($n=1$), $\mathcal{S}_1$ is an equilateral triangle including three vertices and three edges. For $n =2$, to obtain $\mathcal{S}_2$, we perform a bisection of the three sides of  $\mathcal{S}_1$  yielding four small replicas of the original equilateral triangle, and remove the central downward pointing triangle. For $n>2$, $\mathcal{S}_n$ is obtained from $\mathcal{S}_{n-1}$ by performing the two bisecting and removing operations for each triangle in $\mathcal{S}_{n-1}$. Figure~\ref{Sierp01} shows the first three generations of Sierpi\'nski graph, $\mathcal{S}_1$, $\mathcal{S}_2$ and $\mathcal{S}_3$.

%%%%%%%%%%%%%%%%%%%%%%%%%%%%%%%%%%%%%%%%%%%%%%%%%%%%%%%%%
% Figure  8
%%%%%%%%%%%%%%%%%%%%%%%%%%%%%%%%%%%%%%%%%%%%%%%%%%%%%%%%%%
%\begin{figure}[htbp]
%%\begin{center}
%%\includegraphics[width=0.60\textwidth]{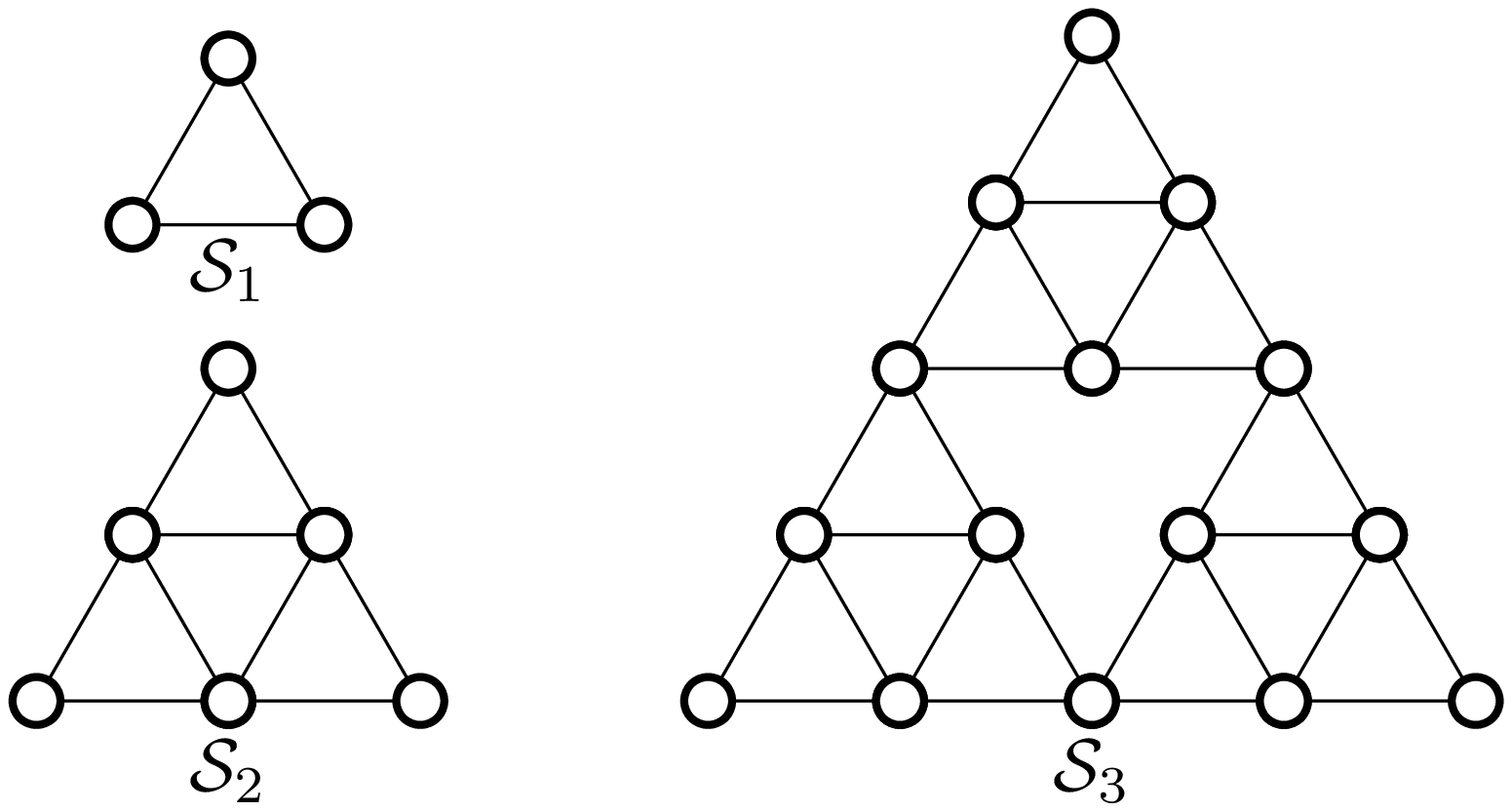} %[width=.60\linewidth,trim=60 40 60 40]
%%\end{center}
%\includegraphics{SGnet.eps}
%\caption[kurzform]{The first three generations of the Sierpi\'nski graph.} \label{Sierp01}
%\end{figure}

\begin{figurehere}
\centerline{
  \includegraphics[width=0.30\textwidth]{SGnet.eps}
}
\caption[kurzform]{The first three generations of the Sierpi\'nski graph.} \label{Sierp01}
\end{figurehere}
%%%%%%%%%%%%%%%%%%%%%%%%%%%%%%%%%%%%%%%%%%%%%%%%%%%%%%%%%%

By construction,  both the number of vertices and the number of edges in the Sierpi\'nskigraph $\mathcal{S}_n$ are the same as  those for the pseudofractal scale-free web $\mathcal{G}_n$, which are $N_n=(3^{n}+3)/2$ and $E_n=3^{n}$, respectively.

Distinct from $\mathcal{G}_n$, the Sierpi\'nski graph $\mathcal{S}_n$ is homogeneous with the degree of their vertices being  3, excluding the topmost vertex $A_n$, leftmost vertex $B_n$ and the rightmost vertex $C_n$, whose degree  is 2. We call these three vertices  as outmost vertices.

%%%%%%%%%%%%%%%%%%%%%%%%%%%%%%%%%%%%%%%%%%%%%%%%%%%%%%%%%
% Figure  9
%%%%%%%%%%%%%%%%%%%%%%%%%%%%%%%%%%%%%%%%%%%%%%%%%%%%%%%%%%
%\begin{figure*}
%\begin{center}
%\includegraphics[width=8.5cm]{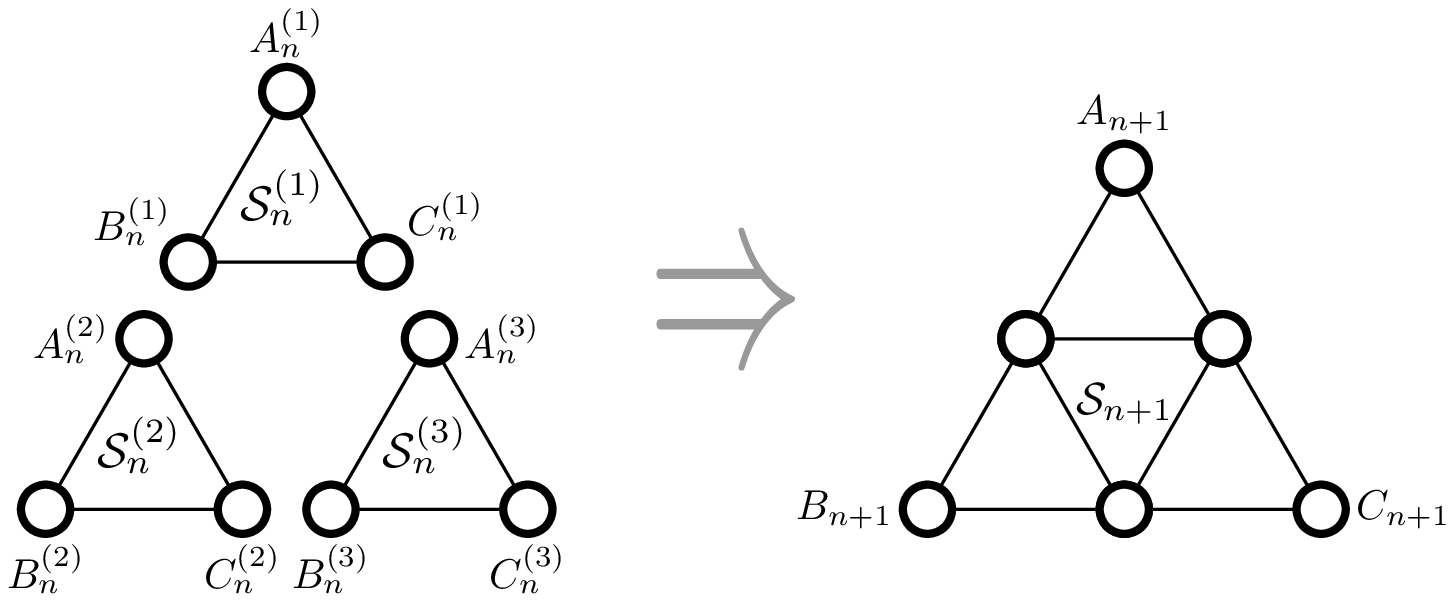}
%\caption{Alternative construction of the Sierpi\'nski graph.} \label{merge}
%\end{center}
%\end{figure*}

\begin{figurehere}
\centerline{
  \includegraphics[width=8cm]{SGsimilar.eps}
}
\caption{Alternative construction of the Sierpi\'nski graph.} \label{merge}
\end{figurehere}
%%%%%%%%%%%%%%%%%%%%%%%%%%%%%%%%%%%%%%%%%%%%%%%%%%%%%%%%%%

In addition the number of vertices and edges, another  similarity between the pseudofractal scale-free web and the Sierpi\'nski graph is  that the latter is also self-similar, which allows us to construct the  Sierpi\'nski graph in an alternative way highlighting its self-similarity. Given the $n$th generation Sierpi\'nski graph $\mathcal{S}_{n}$,  $\mathcal{S}_{n+1}$ can be obtained by merging three copies of $\mathcal{S}_{n}$ at their outmost vertices, see Fig.~\ref{merge}. Let $\mathcal{S}_{n}^{(\theta)}$, $\theta=1,2,3$, denote three copies of $\mathcal{S}_{n}$, and let $A_{n}^{(\theta)}$, $B_{n}^{(\theta)}$, and $C_{n}^{(\theta)}$ represent, respectively, their outmost vertices. Then, one can get $\mathcal{S}_{n+1}$   by coalescing $\mathcal{S}_{n}^{(\theta)}$, with $A_{n}^{(1)}$, $B_{n}^{(2)}$, and $C_{n}^{(3)}$ being the outmost vertices $A_{n+1}$, $B_{n+1}$, and $C_{n+1}$  of $\mathcal{S}_{n+1}$.

\subsection{Edge domination number}

In the case without inducing confusion, for the Sierpi\'nski graph $\mathcal{S}_n$ we employ the same notation as those for pseudofractal scale-free web $\mathcal{G}_n$ considered in the previous section.

For an arbitrary EDS $\chi$ of $\mathcal{G}_n$, there exist three possible states for each of three outmost vertices  according to whether its incident edges are in  $\chi$ or not. For the first state, at least one incident edge is in $\chi$, we denote this state by a filled outmost vertex $\bullet$. For the second state, neither of its incident edges  belongs to  $\chi$,  but both are adjacent to some edges in $\chi$.  We denote this state by an empty outmost vertex $\bigcirc$. While for the third state, neither of its incident edges is in  $\chi$,  but at least one  incident edge is not dominated by  an edge in $\chi$.  We denote this state by a cross outmost vertex $\otimes$.

Let $\gamma_n$ be the edge domination number of  $\mathcal{S}_n$. In order to determine $\gamma_n$, we introduce some more quantities.  First, according to the state of the outmost vertices, all the EDSs of $\mathcal{S}_n$ can be classified   into four classes: $\mathcal{A}_n$, $\mathcal{B}_n$, $\mathcal{C}_n$ and $\mathcal{D}_n$. For each EDS in $\mathcal{A}_n$, there is no filled  outmost vertex.  For each EDS in  $\mathcal{B}_n$, there is one  and only one  filled outmost vertex. While for each EDS in $\mathcal{C}_n$ ($\mathcal{D}_n$),  there are exactly two (three) filled outmost vertices. Moreover,  for $\mathcal{A}_n$, $\mathcal{B}_n$, $\mathcal{C}_n$ and $\mathcal{D}_n$, we can further define some subsets of them  with the smallest cardinality: $\mathcal{A}_n^i$, $\mathcal{B}_n^i$, $\mathcal{C}_n^i$ and $\mathcal{D}_n^i$, $i\in \{ 0, 1, 2,3\}$, where $i$ means that there is/are exact $i$ cross outmost vertices in the corresponding EDS. For example, $\mathcal{B}_n^2$ denotes the subset of $\mathcal{B}_n$ such that  for each EDS in $\mathcal{B}_n^2$, it has  exactly  two cross outmost  vertices and the smallest cardinality, compared with other EDSs in $\mathcal{B}_n$. Finally,  let $a_n^i$,  $b_n^i$, $c_n^i$, and $d_n^i$,  $i=0, 1, 2, 3$,  represent  the cardinality of $\mathcal{A}_n^i$, $\mathcal{B}_n^i$, $\mathcal{C}_n^i$, and $\mathcal{D}_n^i$, respectively.  By definition, we have the following lemma.

\begin{lemma}\label{leSGDom01}
The edge domination number of the Sierpi\'nski graph $\mathcal{S}_n$, $n\geq 3$, is $\gamma_n = \min\{a_n^0,b_n^0,c_n^0,d_n^0\}$.
\end{lemma}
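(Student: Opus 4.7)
The plan is to reduce the minimization over all EDSs of $\mathcal{S}_n$ to a minimization over the four subfamilies $\mathcal{A}_n^0$, $\mathcal{B}_n^0$, $\mathcal{C}_n^0$, $\mathcal{D}_n^0$, by ruling out the cross state at any outmost vertex of a genuine EDS of the full graph. Once this is done, the identity $\gamma_n = \min\{a_n^0, b_n^0, c_n^0, d_n^0\}$ follows immediately from the definitions.

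First I would fix an arbitrary EDS $\chi$ of $\mathcal{S}_n$ and inspect an outmost vertex $v \in \{A_n, B_n, C_n\}$. By the definition of an EDS, each of the two edges incident to $v$ must either belong to $\chi$ or be adjacent to an edge of $\chi$. If some incident edge of $v$ is in $\chi$ then $v$ is filled ($\bullet$); otherwise both incident edges are dominated by edges of $\chi$ meeting them at their non-$v$ endpoint, placing $v$ in the empty state ($\bigcirc$). The cross state ($\otimes$), which by definition requires an undominated incident edge, is incompatible with $\chi$ being an EDS. Consequently every EDS of $\mathcal{S}_n$ has exactly zero cross outmost vertices.

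Next I would invoke the partition: the classes $\mathcal{A}_n$, $\mathcal{B}_n$, $\mathcal{C}_n$, $\mathcal{D}_n$ split the EDSs of $\mathcal{S}_n$ according to whether the number of filled outmost vertices is $0$, $1$, $2$, or $3$. Combined with the previous step, every EDS of $\mathcal{S}_n$ lies in exactly one of $\mathcal{A}_n^0$, $\mathcal{B}_n^0$, $\mathcal{C}_n^0$, $\mathcal{D}_n^0$; conversely, these subfamilies consist of genuine EDSs of $\mathcal{S}_n$. Since by definition $a_n^0, b_n^0, c_n^0, d_n^0$ are the minimum cardinalities in their respective classes, any MEDS attains its size in at least one class, and therefore $\gamma_n = \min\{a_n^0, b_n^0, c_n^0, d_n^0\}$.

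The only technicality is to verify that each of the four $i=0$ subfamilies is nonempty for $n \geq 3$, so that all four minima are well-defined; this is straightforward by exhibiting small EDSs realizing the configurations with $0, 1, 2, 3$ filled outmost vertices once $\mathcal{S}_n$ is large enough. It is worth contrasting this statement with the pseudofractal case of Lemma \ref{Dom01}: the hub vertices of $\mathcal{G}_n$ are pairwise adjacent through the initial triangle, so each of the three edges $A_nB_n$, $B_nC_n$, $C_nA_n$ must be dominated, forcing at least two hubs to be filled and collapsing the expression to $\min\{c_n^0, d_n^0\}$. In $\mathcal{S}_n$ the three outmost vertices are pairwise nonadjacent (each has degree $2$), so no analogous reduction is forced and all four $i=0$ minima genuinely compete for $\gamma_n$.
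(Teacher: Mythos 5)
Your proposal is correct and follows essentially the same route as the paper, which states this lemma without an explicit proof as an immediate consequence of the definitions: a genuine EDS of $\mathcal{S}_n$ dominates every edge, so no outmost vertex can be in the cross state, and classifying EDSs by the number of filled outmost vertices gives $\gamma_n=\min\{a_n^0,b_n^0,c_n^0,d_n^0\}$. Your added remarks (nonemptiness of the four $i=0$ families for $n\geq 3$, and the contrast with Lemma~\ref{Dom01}, where the mutually adjacent hubs force at least two filled hubs) are consistent with the paper's treatment and only make the implicit argument explicit.
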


%%%%%%%%%%%%%%%%%%%%%%%%%%%%%%%%%%%%%%%%%%%%%%%%%%%%%%%%%
% Figure 10
%%%%%%%%%%%%%%%%%%%%%%%%%%%%%%%%%%%%%%%%%%%%%%%%%%%%%%%%%%
%\begin{figure}
%\begin{center}
%\includegraphics[width=0.70\linewidth,trim=0 80 0 500]{DemoDomiSet.eps}
%\end{center}
%\caption[kurzform]{\label{Demo01} Illustrations for the definitions of
%$\Theta_n^k$, $k = 0,1,2,3$, only showing the three hub vertices. (a), (b), (c) and (d) correspond to a dominating set belonging to $\Theta_n^0$, $\Theta_n^1$, $\Theta_n^2$, and $\Theta_n^3$, respectively. Empty circle denotes a hub  in the dominating set, while filled circle represents a hub not in the dominating set.}
%\end{figure}
%%%%%%%%%%%%%%%%%%%%%%%%%%%%%%%%%%%%%%%%%%%%%%%%%%%%%%%%%%

Thus,  to evaluate $\gamma_n$ for $\mathcal{S}_n$, we can alternatively determine  $a_n^0$, $b_n^0$, $c_n^0$, $d_n^0$.  By definition, for $a_n^i$,  $b_n^i$, $c_n^i$, and $d_n^i$, $i=0, 1, 2, 3$, only the following ten quantities are nonzero, that is $a_n^0$, $a_n^1$, $a_n^2$, $a_n^3$,  $b_n^0$, $b_n^1$, $b_n^2$,  $c_n^0$, $c_n^1$,  $d_n^0$. While the remaining quantities are zeros. Using the self-similarity  of the Sierpi\'nski graph, we can establish the recursion relations between these nonzero quantities as stated in the following lemma.
\begin{lemma}
\label{leSGDom02}
For the   Sierpi\'nski graph $\mathcal{S}_n$ with $n \geq 3$, the following relations hold.
\begin{small}
\begin{align}\label{SGDom01}
a_{n+1}^0 &= \min\{2a_n^0+b_n^0,2a_n^0+c_n^0,a_n^0+a_n^1+b_n^0,a_n^0+2b_n^0,\nonumber\\
&\quad a_n^0+a_n^1+c_n^0,a_n^0+b_n^0+b_n^1,a_n^0+b_n^0+c_n^0,3a_n^0, \nonumber\\
&\quad a_n^0+b_n^1+c_n^0,2a_n^1+c_n^0,a_n^1+2b_n^0,a_n^1+b_n^0+b_n^1,\nonumber\\
&\quad a_n^1+b_n^1+c_n^0,a_n^1+2c_n^0,a_n^2+2b_n^0,a_n^2+b_n^0+c_n^0,\nonumber\\
&\quad a_n^2+b_n^1+c_n^0,a_n^2+2c_n^0,3b_n^0,2b_n^0+b_n^1,2b_n^0+c_n^0,\nonumber\\
&\quad b_n^0+2b_n^1,b_n^0+b_n^1+c_n^0,b_n^0+2c_n^0,3b_n^1,2b_n^1+c_n^0,\nonumber\\
&\quad b_n^1+2c_n^0,3c_n^0,a_n^1+b_n^0+c_n^0,a_n^0+2c_n^0\},
\end{align}
\begin{align}\label{SGDom02}
a_{n+1}^1 &= \min\{2a_n^0+a_n^1,2a_n^0+b_n^1,2a_n^0+c_n^1,a_n^0+a_n^1+b_n^0,\nonumber\\
&\quad a_n^0+b_n^0+b_n^2,a_n^0+b_n^0+c_n^1,a_n^0+2b_n^1,2a_n^1+c_n^1,\nonumber\\
&\quad a_n^0+b_n^1+c_n^1,2a_n^1+b_n^0,2a_n^1+c_n^0,a_n^0+b_n^1+c_n^0,\nonumber\\
&\quad a_n^1+a_n^2+b_n^0,a_n^1+a_n^2+c_n^0,a_n^1+2b_n^0,a_n^1+2c_n^0,\nonumber\\
&\quad a_n^2+2b_n^0,a_n^2+b_n^0+b_n^1,a_n^2+b_n^0+c_n^0,a_n^1+2b_n^1,\nonumber\\
&\quad b_n^1+b_n^2+c_n^0,b_n^1+2c_n^0,b_n^1+c_n^0+c_n^1,2c_n^0+c_n^1,\nonumber\\
&\quad b_n^0+b_n^2+c_n^0,b_n^0+c_n^0+c_n^1,2b_n^1+c_n^0,b_n^2+2c_n^0,\nonumber\\
&\quad a_n^2+b_n^1+c_n^1,a_n^2+2c_n^0,a_n^3+2b_n^0,a_n^3+2c_n^0,\nonumber\\
&\quad a_n^0+a_n^1+b_n^1,a_n^0+a_n^1+c_n^0,a_n^0+a_n^1+c_n^1,\nonumber\\
&\quad a_n^0+a_n^2+b_n^0,a_n^0+a_n^2+c_n^0,a_n^0+b_n^0+b_n^1,\nonumber\\
&\quad a_n^1+b_n^0+b_n^1,a_n^1+b_n^0+c_n^0,a_n^1+b_n^0+c_n^1,\nonumber\\
&\quad a_n^1+b_n^1+c_n^0,a_n^1+b_n^1+c_n^1,a_n^2+b_n^0+c_n^1,\nonumber\\
&\quad 2b_n^0+b_n^1,2b_n^0+c_n^1,b_n^0+2b_n^1,b_n^0+b_n^1+c_n^0\},
\end{align}
\begin{align}\label{SGDom03}
a_{n+1}^2 &= \min\{a_n^0+2a_n^1,a_n^0+a_n^1+b_n^1,a_n^0+a_n^1+c_n^1,\nonumber\\
&\quad a_n^0+a_n^2+b_n^1,a_n^0+a_n^2+c_n^1,a_n^0+2b_n^1,2a_n^1+b_n^1,\nonumber\\
&\quad a_n^0+b_n^1+c_n^1,a_n^0+2c_n^1,2a_n^1+b_n^0,a_n^0+b_n^1+b_n^2,\nonumber\\
&\quad 2a_n^1+c_n^0,2a_n^1+c_n^1,a_n^1+a_n^2+b_n^0,a_n^2+b_n^2+c_n^0,\nonumber\\
&\quad a_n^1+2c_n^1,2a_n^2+c_n^0,a_n^2+b_n^0+b_n^1,a_n^2+b_n^0+b_n^2,\nonumber\\
&\quad a_n^2+b_n^0+c_n^1,a_n^2+2b_n^1,a_n^2+b_n^1+c_n^0,c_n^0+2c_n^1,\nonumber\\
&\quad a_n^2+2c_n^1,a_n^3+b_n^1+c_n^0,a_n^3+b_n^2+c_n^0,a_n^1+2b_n^1,\nonumber\\
&\quad b_n^0+2b_n^1,b_n^0+b_n^1+b_n^2,b_n^0+b_n^1+c_n^1,b_n^0+2c_n^1,\nonumber\\
&\quad 2b_n^1+c_n^0,2b_n^1+c_n^1,b_n^1+b_n^2+c_n^1,b_n^0+b_n^2+c_n^1,\nonumber\\
&\quad a_n^1+b_n^1+c_n^0,3b_n^1,a_n^1+b_n^1+c_n^1,a_n^1+b_n^2+c_n^0,\nonumber\\
&\quad a_n^1+a_n^2+c_n^0,a_n^1+a_n^2+c_n^1,a_n^1+b_n^0+b_n^1,\nonumber\\
&\quad a_n^1+b_n^0+b_n^2,a_n^1+b_n^0+c_n^1,a_n^1+b_n^1+b_n^2,\nonumber\\
&\quad b_n^1+c_n^0+c_n^1,b_n^1+2c_n^1,b_n^2+c_n^0+c_n^1\},
\end{align}
\begin{align}\label{SGDom04}
a_{n+1}^3 &= \min\{3a_n^1,2a_n^1+b_n^1,2a_n^1+c_n^1,a_n^1+a_n^2+b_n^1,3c_n^1,\nonumber\\
&\quad a_n^1+a_n^2+c_n^1,a_n^1+2b_n^1,a_n^1+b_n^1+c_n^1,a_n^2+2b_n^1,\nonumber\\
&\quad a_n^1+b_n^2+c_n^1,a_n^1+2c_n^1,2a_n^2+c_n^1,a_n^1+b_n^1+b_n^2,\nonumber\\
&\quad a_n^2+b_n^1+b_n^2,a_n^2+b_n^1+c_n^1,a_n^2+2c_n^1,b_n^2+2c_n^1,\nonumber\\
&\quad a_n^3+2b_n^1,a_n^3+b_n^1+c_n^1,a_n^3+b_n^2+c_n^1,a_n^3+2c_n^1,\nonumber\\
&\quad 3b_n^1,2b_n^1+b_n^2,2b_n^1+c_n^1,b_n^1+2b_n^2,b_n^1+b_n^2+c_n^1,\nonumber\\
&\quad b_n^1+2c_n^1,3b_n^2,2b_n^2+c_n^1,a_n^2+b_n^2+c_n^1\},
\end{align}

\begin{align}
b_{n+1}^0 &= \min\{2a_n^0+b_n^0,2a_n^0+c_n^0,2a_n^0+d_n^0,a_n^0+a_n^1+c_n^0,3c_n^0,\nonumber\\
&\quad a_n^2+b_n^0+d_n^0,a_n^2+b_n^1+d_n^0,a_n^1+b_n^1+d_n^0,2b_n^0+b_n^1,\nonumber\\
&\quad a_n^0+b_n^0+c_n^1,a_n^0+b_n^0+d_n^0,a_n^0+b_n^1+c_n^0,2c_n^0+c_n^1,\nonumber\\
&\quad 2b_n^0+b_n^2,2b_n^0+c_n^0,2b_n^0+d_n^0,b_n^0+2b_n^1,b_n^0+c_n^0+c_n^1,\nonumber
\end{align}
\begin{align}\label{SGDom05}
&\quad 2c_n^0+d_n^0,b_n^0+b_n^1+c_n^0,b_n^0+2c_n^0,b_n^2+2c_n^0,a_n^0+2c_n^0,\nonumber\\
&\quad 2a_n^1+d_n^0,a_n^1+2b_n^0,a_n^1+b_n^0+b_n^1,a_n^0+b_n^1+d_n^0,3b_n^0,\nonumber\\
&\quad a_n^1+b_n^0+c_n^0,a_n^1+b_n^0+d_n^0,a_n^1+b_n^1+c_n^0,a_n^1+2c_n^0,\nonumber\\
&\quad a_n^0+a_n^1+d_n^0,a_n^0+2b_n^0,a_n^0+b_n^0+b_n^1,a_n^0+b_n^0+c_n^0,\nonumber\\
&\quad b_n^1+2c_n^0,b_n^1+c_n^0+c_n^1,b_n^0+c_n^0+d_n^0,b_n^1+c_n^0+d_n^0\},
\end{align}

\begin{align}\label{SGDom06}
b_{n+1}^1 &= \min\{a_n^0+a_n^1+b_n^0,a_n^0+a_n^1+c_n^0,a_n^0+a_n^1+d_n^0,\nonumber\\
&\quad a_n^0+a_n^2+c_n^0,a_n^0+a_n^2+d_n^0,a_n^0+b_n^0+b_n^1,a_n^0+2b_n^1,\nonumber\\
&\quad a_n^0+b_n^0+c_n^1,a_n^0+b_n^1+c_n^0,a_n^0+b_n^1+c_n^1,2a_n^1+c_n^0,\nonumber\\
&\quad a_n^0+b_n^2+c_n^0,a_n^0+b_n^2+d_n^0,a_n^0+c_n^0+c_n^1,2a_n^1+d_n^0,\nonumber\\
&\quad a_n^1+a_n^2+d_n^0,a_n^1+2b_n^0,a_n^0+b_n^1+d_n^0,a_n^0+c_n^1+d_n^0,\nonumber\\
&\quad a_n^1+b_n^0+b_n^1,a_n^1+b_n^0+c_n^0,a_n^1+b_n^0+c_n^1,a_n^1+2c_n^1,\nonumber\\
&\quad a_n^1+b_n^1+c_n^0,a_n^1+b_n^1+c_n^1,a_n^1+b_n^1+d_n^0,a_n^2+2c_n^0,\nonumber\\
&\quad a_n^1+b_n^2+d_n^0,a_n^1+2c_n^0,a_n^1+c_n^0+c_n^1,a_n^1+c_n^0+d_n^0,\nonumber\\
&\quad a_n^1+c_n^1+d_n^0,a_n^2+2b_n^0,a_n^2+b_n^0+c_n^0,a_n^1+b_n^0+d_n^0,\nonumber\\
&\quad a_n^2+b_n^0+c_n^1,a_n^2+b_n^0+d_n^0,a_n^2+b_n^1+c_n^0,a_n^2+2c_n^1,\nonumber\\
&\quad a_n^2+b_n^2+d_n^0,a_n^2+c_n^0+c_n^1,a_n^2+c_n^0+d_n^0,2b_n^0+b_n^1,\nonumber\\
&\quad a_n^2+b_n^1+d_n^0,a_n^2+c_n^1+d_n^0,a_n^3+b_n^0+c_n^0,a_n^3+2c_n^0,\nonumber\\
&\quad a_n^3+b_n^1+d_n^0,a_n^3+c_n^0+c_n^1,a_n^3+c_n^0+d_n^0,2b_n^0+b_n^2,\nonumber\\
&\quad b_n^0+2b_n^1,b_n^0+b_n^1+b_n^2,a_n^3+b_n^0+d_n^0,a_n^1+b_n^2+c_n^0,\nonumber\\
&\quad 2b_n^0+c_n^1,b_n^0+b_n^1+c_n^0,b_n^0+b_n^1+c_n^1,b_n^0+b_n^1+d_n^0,\nonumber\\
&\quad b_n^0+b_n^2+c_n^0,b_n^0+b_n^2+c_n^1,b_n^0+b_n^2+d_n^0,b_n^0+2c_n^1,\nonumber\\
&\quad b_n^0+c_n^0+c_n^1,b_n^0+c_n^1+d_n^0,3b_n^1,2b_n^1+c_n^0,2b_n^1+c_n^1,\nonumber\\
&\quad 2b_n^1+d_n^0,b_n^1+b_n^2+c_n^0,b_n^1+b_n^2+c_n^1,b_n^1+b_n^2+d_n^0,\nonumber\\
&\quad b_n^2+c_n^0+d_n^0,b_n^1+c_n^0+c_n^1,b_n^1+c_n^0+d_n^0,b_n^1+2c_n^1,\nonumber\\
&\quad b_n^1+c_n^1+d_n^0,2b_n^2+c_n^0,b_n^2+2c_n^0,b_n^2+c_n^0+c_n^1,\nonumber\\
&\quad b_n^1+2c_n^0,2c_n^0+c_n^1,c_n^0+2c_n^1,c_n^0+c_n^1+d_n^0\},
\end{align}
\begin{align}\label{SGDom07}
b_{n+1}^2 &= \min\{2a_n^1+b_n^0,2a_n^1+c_n^0,2a_n^1+d_n^0,a_n^1+a_n^2+c_n^0,\nonumber\\
&\quad a_n^1+a_n^2+d_n^0,a_n^1+b_n^0+b_n^1,a_n^1+b_n^0+c_n^1,a_n^1+2b_n^1,\nonumber\\
&\quad a_n^1+b_n^1+c_n^0,a_n^1+b_n^1+c_n^1,a_n^1+b_n^1+d_n^0,2a_n^2+d_n^0,\nonumber\\
&\quad a_n^1+b_n^2+d_n^0,a_n^1+c_n^0+c_n^1,a_n^2+b_n^0+b_n^1,b_n^0+2b_n^1,\nonumber\\
&\quad a_n^2+b_n^0+c_n^1,a_n^2+b_n^1+c_n^0,a_n^2+b_n^1+c_n^1,2c_n^1+d_n^0,\nonumber\\
&\quad a_n^2+b_n^1+d_n^0,a_n^2+b_n^2+c_n^0,a_n^2+b_n^2+d_n^0,b_n^0+2c_n^1,\nonumber\\
&\quad a_n^3+b_n^1+d_n^0,a_n^3+b_n^2+d_n^0,b_n^0+b_n^1+b_n^2,2b_n^1+b_n^2,\nonumber\\
&\quad a_n^2+c_n^0+c_n^1,a_n^1+b_n^2+c_n^0,b_n^0+b_n^1+c_n^1,b_n^1+2c_n^1,\nonumber\\
&\quad 2b_n^1+c_n^0,2b_n^1+c_n^1,2b_n^1+d_n^0,b_n^1+b_n^2+c_n^0,3b_n^1,3c_n^1,\nonumber\\
&\quad b_n^1+c_n^0+c_n^1,b_n^1+c_n^1+d_n^0,b_n^2+c_n^0+c_n^1,b_n^2+2c_n^1,\nonumber\\
&\quad b_n^2+c_n^1+d_n^0,c_n^0+2c_n^1\},
\end{align}
\begin{align}\label{SGDom08}
c_{n+1}^0 &= \min\{a_n^0+2b_n^0,a_n^0+b_n^0+c_n^0,a_n^0+b_n^0+d_n^0,3b_n^0,\nonumber\\
&\quad a_n^0+b_n^1+c_n^0,a_n^0+b_n^1+d_n^0,a_n^0+2c_n^0,a_n^0+c_n^0+c_n^1,\nonumber\\
&\quad a_n^0+c_n^0+d_n^0,a_n^0+2d_n^0,a_n^1+b_n^0+c_n^0,a_n^1+b_n^0+d_n^0,\nonumber\\
&\quad a_n^1+2c_n^0,a_n^1+c_n^0+c_n^1,a_n^1+c_n^0+d_n^0,a_n^1+b_n^1+d_n^0,\nonumber\\
&\quad a_n^1+2d_n^0,a_n^2+2c_n^0,a_n^2+2d_n^0,2b_n^0+c_n^0,b_n^0+c_n^0+d_n^0,\nonumber\\
&\quad 2b_n^0+c_n^1,2b_n^0+d_n^0,b_n^0+b_n^1+c_n^0,b_n^0+2c_n^0,c_n^0+2d_n^0,\nonumber\\
&\quad b_n^2+2c_n^0,b_n^2+c_n^0+c_n^1,3c_n^0,2c_n^0+d_n^0,c_n^0+c_n^1+d_n^0,\nonumber\\
&\quad b_n^0+b_n^1+c_n^1,b_n^0+b_n^1+d_n^0,b_n^0+c_n^0+c_n^1,2b_n^0+b_n^1,\nonumber\\
&\quad b_n^1+c_n^0+c_n^1,b_n^1+c_n^0+d_n^0,b_n^1+c_n^1+d_n^0,b_n^1+2d_n^0,\nonumber\\
&\quad b_n^0+c_n^1+d_n^0,b_n^0+2d_n^0,2b_n^1+c_n^0,b_n^1+2c_n^0\},
\end{align}
\begin{align}\label{SGDom09}
c_{n+1}^1 &= \min\{a_n^1+2b_n^0,a_n^1+b_n^0+c_n^0,a_n^1+b_n^0+d_n^0,\nonumber\\
&\quad a_n^1+b_n^1+c_n^0,a_n^1+b_n^1+d_n^0,a_n^1+2c_n^0,a_n^1+c_n^0+c_n^1,\nonumber\\
&\quad a_n^1+c_n^0+d_n^0,a_n^1+2d_n^0,a_n^2+b_n^0+c_n^0,a_n^2+b_n^0+d_n^0,\nonumber\\
&\quad a_n^2+b_n^1+d_n^0,a_n^2+2c_n^0,a_n^2+c_n^0+c_n^1,a_n^2+c_n^0+d_n^0,\nonumber\\
&\quad b_n^0+2b_n^1,b_n^0+b_n^1+c_n^0,b_n^0+b_n^1+c_n^1,b_n^1+c_n^0+c_n^1,\nonumber\\
&\quad b_n^0+b_n^2+c_n^0,b_n^0+c_n^0+c_n^1,b_n^0+2c_n^1,c_n^0+c_n^1+d_n^0,\nonumber\\
&\quad b_n^1+c_n^0+d_n^0,b_n^1+2c_n^1,b_n^1+c_n^1+d_n^0,b_n^0+b_n^1+d_n^0,\nonumber\\
&\quad a_n^2+2d_n^0,a_n^3+2c_n^0,a_n^3+2d_n^0,2b_n^0+b_n^1,2b_n^0+c_n^1,\nonumber\\
&\quad 2b_n^1+c_n^1,2b_n^1+d_n^0,b_n^2+2d_n^0,b_n^1+2c_n^0,b_n^1+2d_n^0,\nonumber\\
&\quad b_n^2+2c_n^0,b_n^2+c_n^0+c_n^1,b_n^2+c_n^0+d_n^0,b_n^2+2c_n^1,\nonumber\\
&\quad b_n^2+c_n^1+d_n^0,b_n^1+b_n^2+c_n^0,2c_n^0+c_n^1,2b_n^1+c_n^0,\nonumber\\
&\quad 2c_n^1+d_n^0,c_n^1+2d_n^0\},
\end{align}
\begin{align}\label{SGDom10}
d_{n+1}^0 &= \min\{3b_n^0,2b_n^0+c_n^0,b_n^0+b_n^1+c_n^0,b_n^0+c_n^1+d_n^0,\nonumber\\
&\quad b_n^0+b_n^1+d_n^0,b_n^0+2c_n^0,b_n^0+c_n^0+c_n^1,b_n^0+c_n^0+d_n^0,\nonumber\\
&\quad b_n^0+2d_n^0,2b_n^1+d_n^0,b_n^1+2c_n^0,b_n^1+c_n^0+c_n^1,b_n^2+2d_n^0,\nonumber\\
&\quad b_n^1+c_n^0+d_n^0,b_n^1+c_n^1+d_n^0,b_n^1+2d_n^0,b_n^2+2c_n^0,3c_n^1,\nonumber\\
&\quad b_n^2+c_n^0+d_n^0,b_n^2+c_n^1+d_n^0,3c_n^0,2c_n^0+c_n^1,2c_n^0+d_n^0,\nonumber\\
&\quad c_n^0+2c_n^1,c_n^0+c_n^1+d_n^0,c_n^0+2d_n^0,2c_n^1+d_n^0,3d_n^0,\nonumber\\
&\quad c_n^1+2d_n^0,2b_n^0+d_n^0\},
\end{align}
\end{small}
\end{lemma}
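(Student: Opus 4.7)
The plan is to parallel the strategy used for Lemma~\ref{edgDom02} in the pseudofractal case, exploiting the self-similar decomposition $\mathcal{S}_{n+1} = \mathcal{S}_n^{(1)} \cup \mathcal{S}_n^{(2)} \cup \mathcal{S}_n^{(3)}$, where the three copies are glued only at the three pairs of identified outmost vertices $A_n^{(1)}\!\equiv\!B_n^{(3)}$, $B_n^{(2)}\!\equiv\!C_n^{(3)}$, $C_n^{(1)}\!\equiv\!A_n^{(2)}$ (with these merged pairs becoming the outmost vertices of $\mathcal{S}_{n+1}$, after an appropriate relabeling). The crucial observation is that the edge set of $\mathcal{S}_{n+1}$ is the disjoint union of the three edge sets, so any EDS $\chi$ of $\mathcal{S}_{n+1}$ restricts to an EDS-like set $\chi^{(\theta)}$ on each copy, and the cardinality is additive: $|\chi| = |\chi^{(1)}| + |\chi^{(2)}| + |\chi^{(3)}|$. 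Minimizing over all valid joint configurations therefore reduces to minimizing a sum over the three copies.

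First I would fix, for each of the ten target quantities $a_{n+1}^i$, $b_{n+1}^i$, $c_{n+1}^i$, $d_{n+1}^i$, the target state (filled/empty/cross) of each of the three outmost vertices $A_{n+1}, B_{n+1}, C_{n+1}$ of $\mathcal{S}_{n+1}$. For each merged vertex, say $A_{n+1} = A_n^{(1)} = B_n^{(3)}$, I would tabulate which combinations of internal states of $A_n^{(1)}$ in $\mathcal{S}_n^{(1)}$ and $B_n^{(3)}$ in $\mathcal{S}_n^{(3)}$ produce the required state of $A_{n+1}$. The merging rule is: $A_{n+1}$ is filled iff at least one of $A_n^{(1)}$, $B_n^{(3)}$ is filled; it is empty iff neither is filled and each of its incident edges in the union is adjacent to some chosen edge (so at least one of the two is empty and the other is empty or cross); and it is cross iff both are cross or exactly one is cross while the other has an undominated incident edge after union. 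From this I would read off the admissible triples of labels for the three copies.

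Next, for each admissible joint labeling of the three copies, the minimum weight contribution from $\mathcal{S}_n^{(\theta)}$ is precisely one of the cardinalities $a_n^i, b_n^i, c_n^i, d_n^i$ corresponding to its outmost-vertex configuration (because within each copy the three labels of its outmost vertices together with the number of cross labels determine the subset $\mathcal{A}_n^i, \mathcal{B}_n^i, \mathcal{C}_n^i, \mathcal{D}_n^i$ whose minimum cardinality we already defined). Summing these three cardinalities for each admissible labeling and taking the minimum over labelings gives the right-hand side of the desired recursion for $a_{n+1}^i$, $b_{n+1}^i$, $c_{n+1}^i$, or $d_{n+1}^i$. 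A final symmetry reduction using the threefold rotational symmetry of $\mathcal{S}_{n+1}$ (together with the convention that the classes $\mathcal{A}^i, \mathcal{B}^i, \mathcal{C}^i, \mathcal{D}^i$ are defined up to which outmost vertex plays which role) lets one keep only one representative per rotational class, exactly as is done pictorially in Figures~\ref{Theta0}–\ref{Theta3} for the pseudofractal case.

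The main obstacle is the combinatorial bookkeeping: unlike the pseudofractal network, where only four subsets $\mathcal{B}_n^2, \mathcal{C}_n^0, \mathcal{C}_n^1, \mathcal{D}_n^0$ are nonempty, here ten subsets are nonempty and each outmost vertex has three possible local states, so the raw case list has many hundreds of entries before the rotational quotient and before pruning infeasible triples. The cleanest way to carry out the proof is to organize the argument as a table enumeration: for each of the ten target subsets, list the target outmost-vertex states, then for each merged vertex enumerate the two-copy state pairs compatible with it, then intersect across the three shared vertices to obtain joint labelings, and finally read off the corresponding sum of $a_n^i/b_n^i/c_n^i/d_n^i$ terms. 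Each individual case is routine — the same kind of graphical argument that Figures~\ref{Theta0}–\ref{Theta3} make in the pseudofractal case — but verifying that the listed terms in~\eqref{SGDom01}–\eqref{SGDom10} form a complete and non-redundant enumeration is the bulk of the work and is best presented via illustrative figures analogous to those used earlier, rather than written out exhaustively in text.
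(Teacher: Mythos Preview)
Your overall plan---exploit the self-similar decomposition of $\mathcal{S}_{n+1}$ into three copies of $\mathcal{S}_n$, note that edge sets are disjoint so cardinalities add, enumerate all compatible labelings of the six copy-level outmost vertices, and take the minimum---is exactly the paper's approach, which it presents purely graphically via Figures~\ref{SGTheta0}--\ref{SGeta0}.

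However, you have the geometry of the Sierpi\'nski merge backwards, and this is not a cosmetic slip. In $\mathcal{S}_{n+1}$ the three merged vertex pairs become \emph{interior} vertices of $\mathcal{S}_{n+1}$ (each of degree four), while the outmost vertices $A_{n+1},B_{n+1},C_{n+1}$ are the three \emph{unmerged} corners $A_n^{(1)},B_n^{(2)},C_n^{(3)}$, each lying in a single copy. This is the opposite of the pseudofractal web, where the merged pairs are the new hubs. Consequently your ``merging rule'' is misplaced: the target state (filled/empty/cross) of each outmost vertex of $\mathcal{S}_{n+1}$ constrains the state of a single outmost vertex in one copy directly, with no combination needed. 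The nontrivial compatibility conditions instead live at the three \emph{interior} merged vertices, where the requirement is simply that no incident edge of $\mathcal{S}_{n+1}$ be undominated---equivalently, a copy may contribute a cross state at a merged vertex only if the partner copy contributes a filled state there. This difference (three externally constrained unmerged vertices plus three internally constrained merged vertices, rather than three merged-and-constrained vertices) is precisely why the Sierpi\'nski recursions involve all ten quantities and are an order of magnitude longer than the pseudofractal ones. If you carried out your stated analysis literally you would reproduce something like Lemma~\ref{edgDom02}, not Lemma~\ref{leSGDom02}. Once the geometry is corrected, the rest of your plan goes through and coincides with the paper's proof.
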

\begin{proof}
This lemma can be proved graphically. Figs.~\ref{SGTheta0}-\ref{SGeta0} illustrate the graphical representations from Eq.~\eqref{SGDom01} to Eq.~\eqref{SGDom10}.
\end{proof}
%We will prove this lemma by mathematical induction on $n$. For $n = 3$, we can obtain $A_3^3 = 4$, $A_3^0 = 5,A_3^1 = 5,A_3^2 = 5,B_3^1 = 5,B_3^2 = 5,C_3^1 = 5$, and $B_3^0 = 6,C_3^0 = 6,D_3^0 = 6$, thus we get $P_3 = 4$, $Q_3 = 5$ and $R_3 = 6$.
%Assuming that the lemma hold for $n = t, t \geq 3$. Then, we take $\mathcal{A}_t+1^3$ belongs to $\mathcal{P}_t+1$ as an example. From
%\end{proof}

%%%%%%%%%%%%%%%%%%%%%%%%%%%%%%%%%%%%%%%%%%%%%%%%%%%%%%%%%
% Figure  10
%%%%%%%%%%%%%%%%%%%%%%%%%%%%%%%%%%%%%%%%%%%%%%%%%%%%%%%%%%
\begin{figure*}[htbp]
\centering
\includegraphics[width=0.8\textwidth]{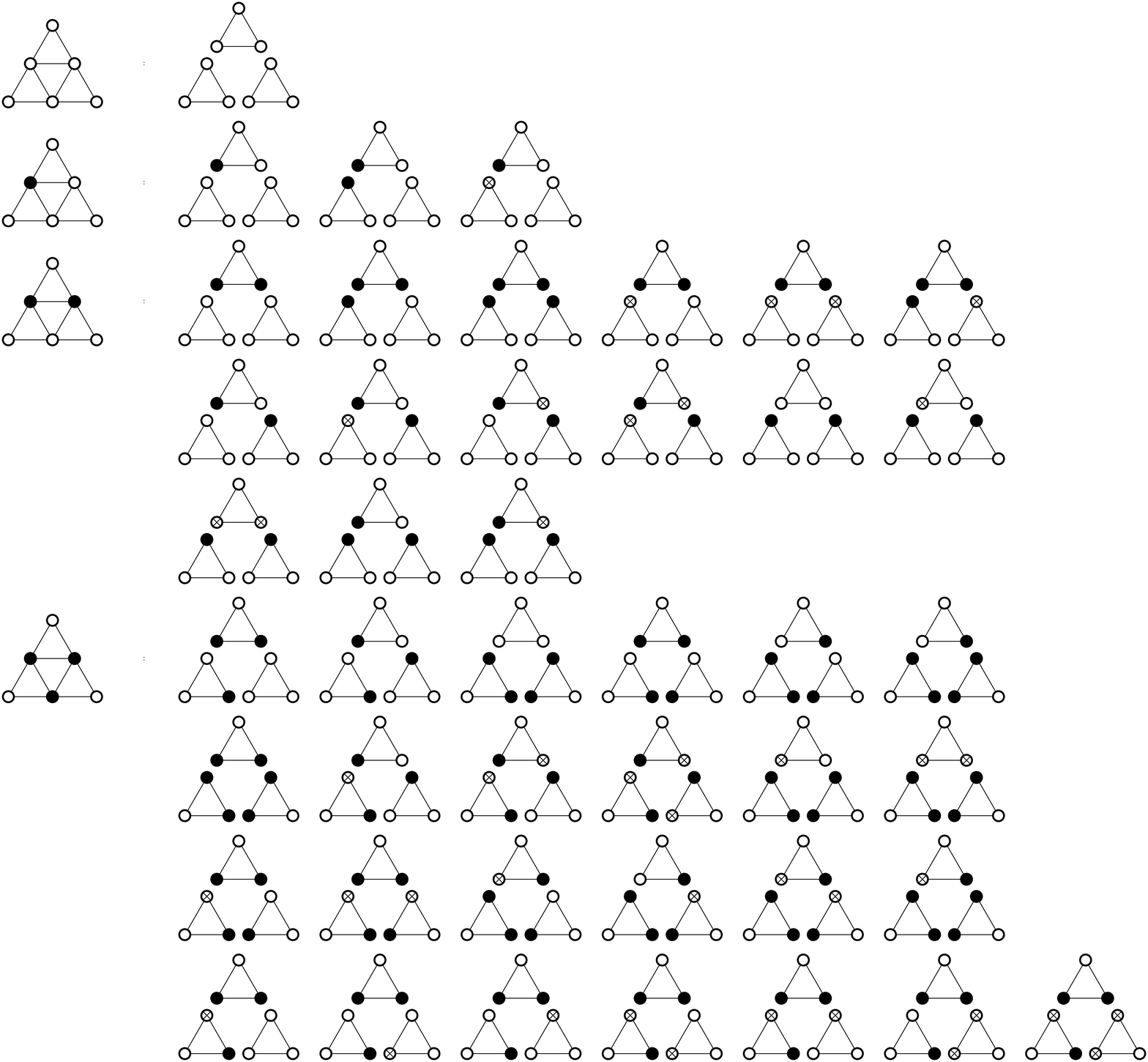}
\caption{\label{SGTheta0}Illustration of all possible configurations of EDSs in $\mathcal{A}_{n+1}^0$ for $\mathcal{S}_{n+1}$. Only the outmost vertices of $\mathcal{S}_{n}^{(\theta)}$, $\theta=1,2,3$, are shown.  It is the same with Figs.~\ref{SGTheta1}-\ref{SGeta0}. }%It is the same with Figs.~\ref{SGTheta1}-\ref{SGeta0}.
\end{figure*}
%%%%%%%%%%%%%%%%%%%%%%%%%%%%%%%%%%%%%%%%%%%%%%%%%%%%%%%%%%

%%%%%%%%%%%%%%%%%%%%%%%%%%%%%%%%%%%%%%%%%%%%%%%%%%%%%%%%%
% Figure  11
%%%%%%%%%%%%%%%%%%%%%%%%%%%%%%%%%%%%%%%%%%%%%%%%%%%%%%%%%%
\begin{figure*}[htbp]
\centering
\includegraphics[width=0.7\textwidth]{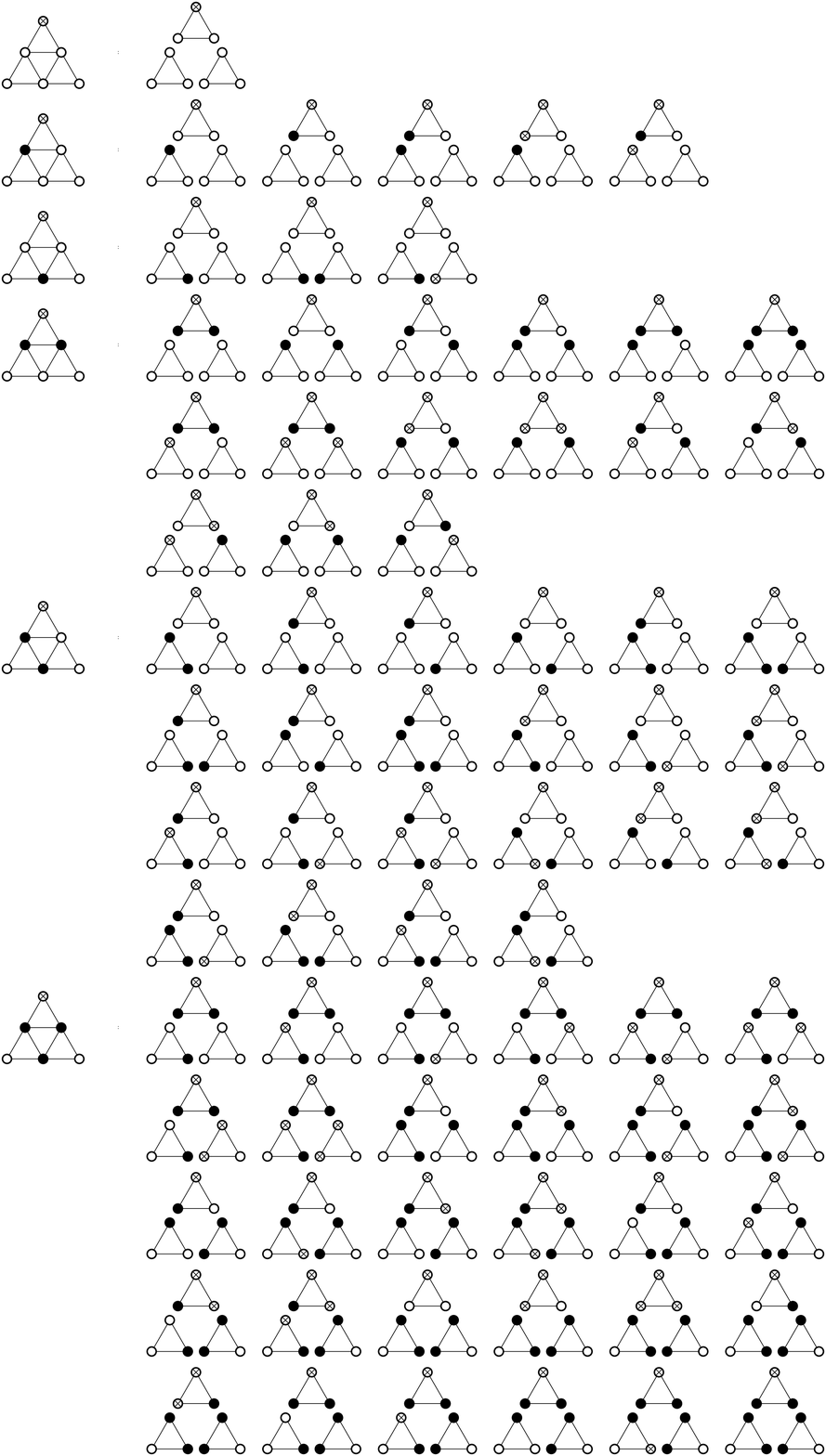}
\caption{\label{SGTheta1}Illustration of all possible configurations of EDSs in $\mathcal{A}_{n+1}^1$ for $\mathcal{S}_{n+1}$.}
\end{figure*}
%%%%%%%%%%%%%%%%%%%%%%%%%%%%%%%%%%%%%%%%%%%%%%%%%%%%%%%%%%

%%%%%%%%%%%%%%%%%%%%%%%%%%%%%%%%%%%%%%%%%%%%%%%%%%%%%%%%%
% Figure  12
%%%%%%%%%%%%%%%%%%%%%%%%%%%%%%%%%%%%%%%%%%%%%%%%%%%%%%%%%%
\begin{figure*}[htbp]
\centering
\includegraphics[width=0.65\textwidth]{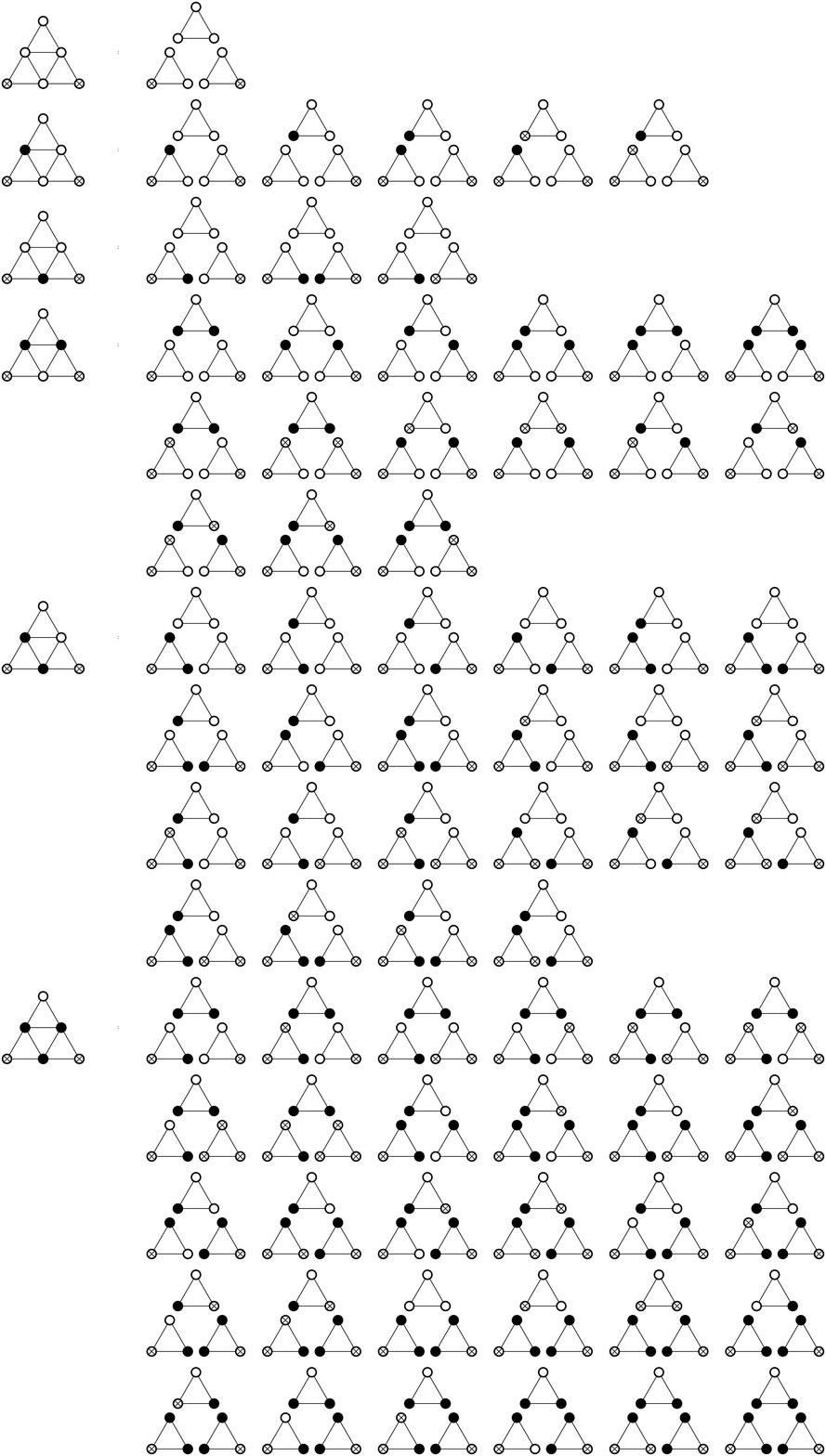}
\caption{\label{SGTheta2} Illustration of all possible configurations of EDSs in $\mathcal{A}_{n+1}^2$ for $\mathcal{S}_{n+1}$.}
\end{figure*}
%%%%%%%%%%%%%%%%%%%%%%%%%%%%%%%%%%%%%%%%%%%%%%%%%%%%%%%%%%

%%%%%%%%%%%%%%%%%%%%%%%%%%%%%%%%%%%%%%%%%%%%%%%%%%%%%%%%%%
%% Figure  13
%%%%%%%%%%%%%%%%%%%%%%%%%%%%%%%%%%%%%%%%%%%%%%%%%%%%%%%%%%%
\begin{figure*}[htbp]
\centering
\includegraphics[width=0.9\textwidth]{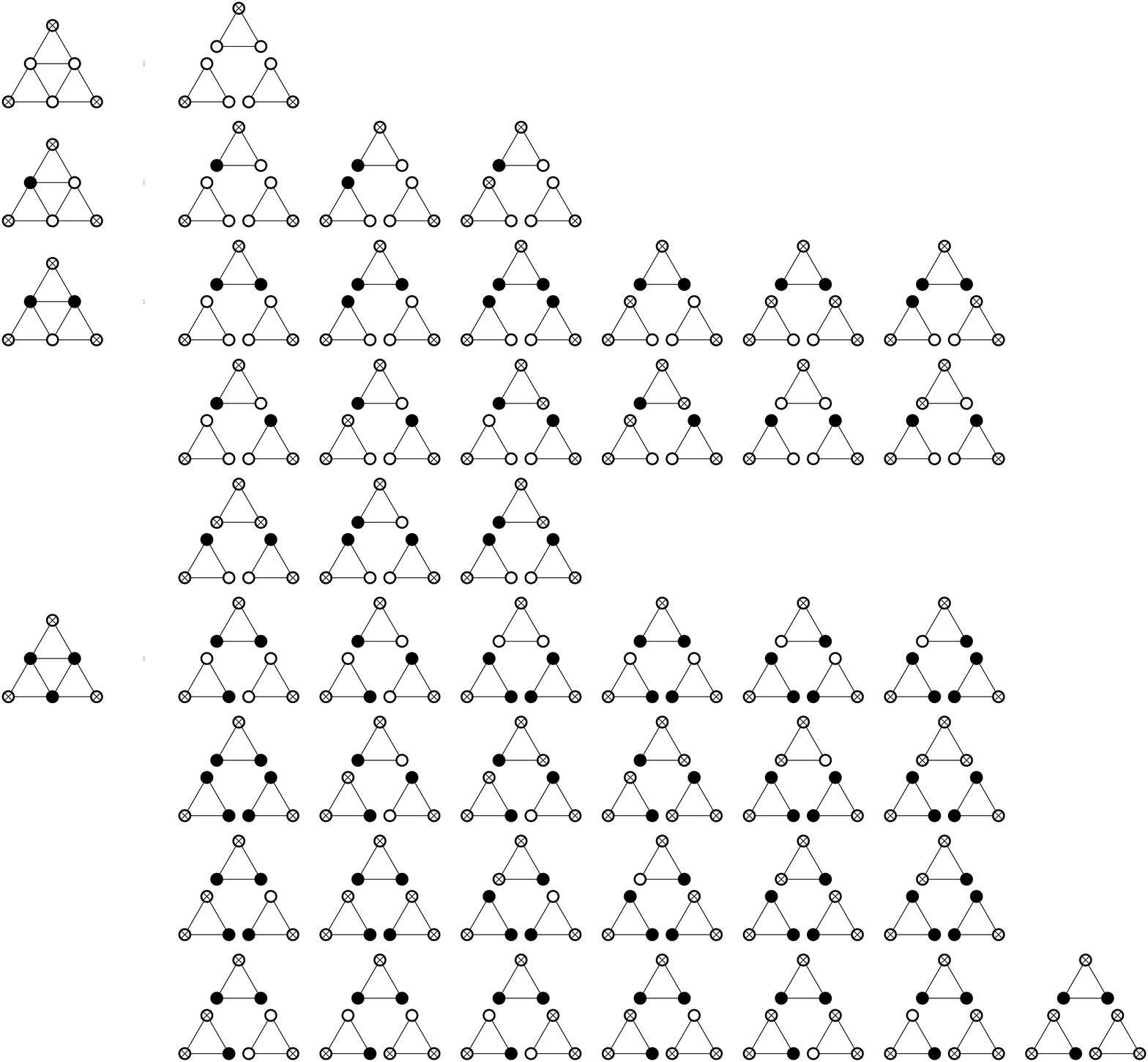}
\caption{\label{SGTheta3}Illustration of all possible configurations of EDSs in $\mathcal{A}_{n+1}^3$ for $\mathcal{S}_{n+1}$.}
\end{figure*}
%%%%%%%%%%%%%%%%%%%%%%%%%%%%%%%%%%%%%%%%%%%%%%%%%%%%%%%%%%%
%
%%%%%%%%%%%%%%%%%%%%%%%%%%%%%%%%%%%%%%%%%%%%%%%%%%%%%%%%%%
%% Figure  14
%%%%%%%%%%%%%%%%%%%%%%%%%%%%%%%%%%%%%%%%%%%%%%%%%%%%%%%%%%%
\begin{figure*}[htbp]
\centering
\includegraphics[width=0.7\textwidth]{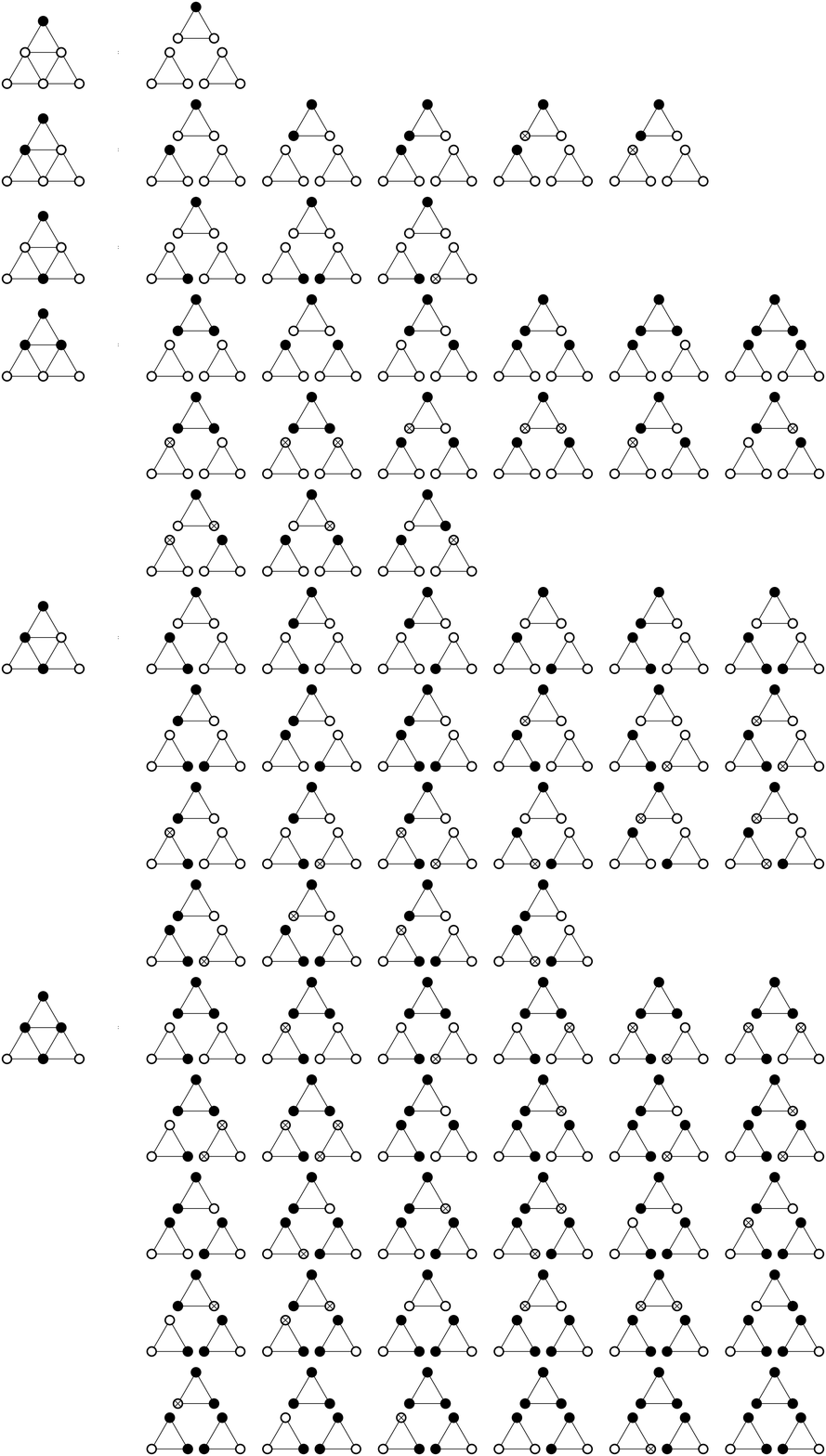}
\caption{\label{SGphi0}Illustration of all possible configurations of EDSs in $\mathcal{B}_{n+1}^0$ for $\mathcal{S}_{n+1}$.}
\end{figure*}
%%%%%%%%%%%%%%%%%%%%%%%%%%%%%%%%%%%%%%%%%%%%%%%%%%%%%%%%%%%
%
%%%%%%%%%%%%%%%%%%%%%%%%%%%%%%%%%%%%%%%%%%%%%%%%%%%%%%%%%%
%% Figure  15
%%%%%%%%%%%%%%%%%%%%%%%%%%%%%%%%%%%%%%%%%%%%%%%%%%%%%%%%%%%
\begin{figure*}[htbp]
\centering
\includegraphics[width=0.75\textwidth]{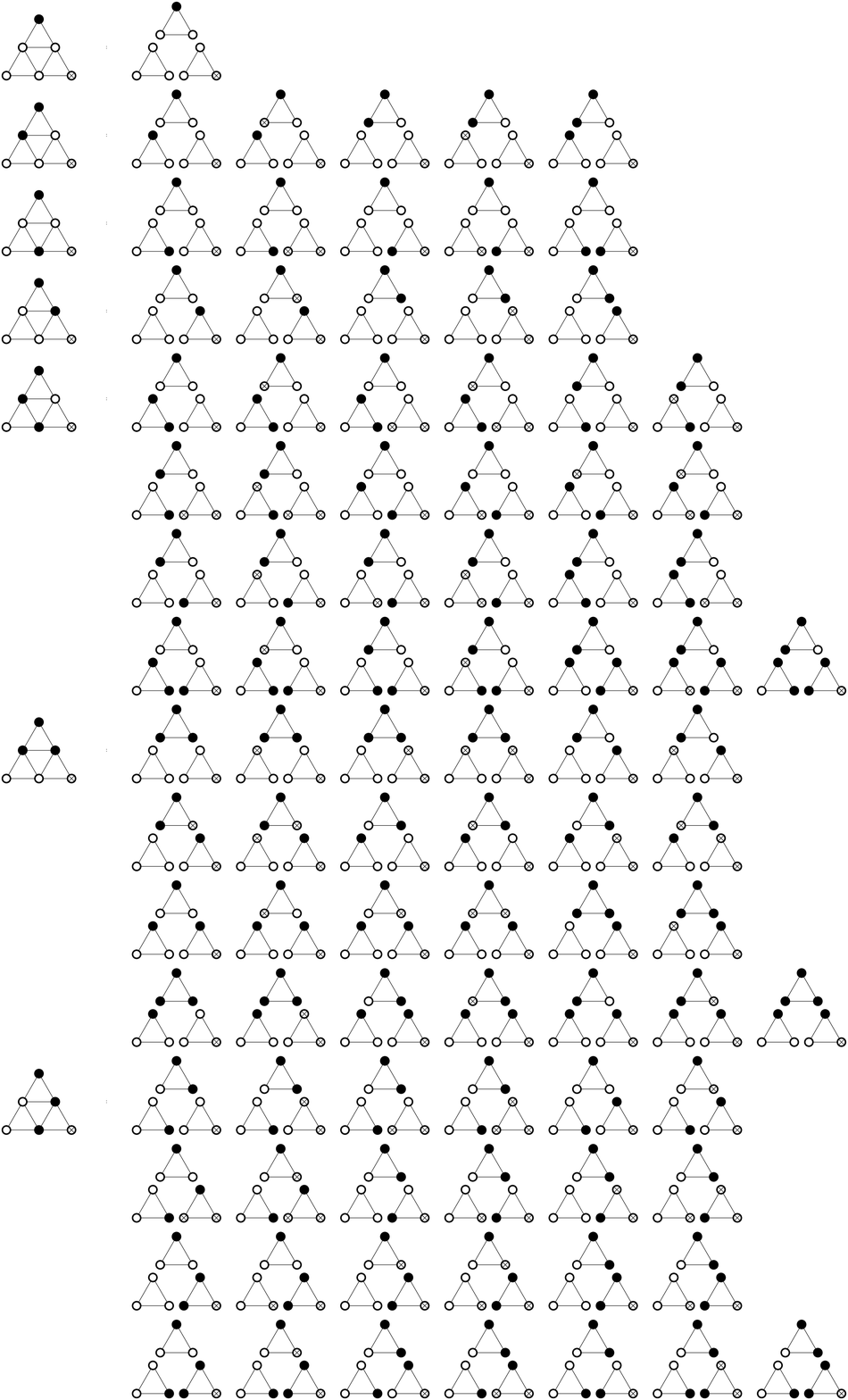}
\caption{\label{SGphi1}Illustration of all possible configurations of EDSs in $\mathcal{B}_{n+1}^1$ for $\mathcal{S}_{n+1}$.}
\end{figure*}
%%%%%%%%%%%%%%%%%%%%%%%%%%%%%%%%%%%%%%%%%%%%%%%%%%%%%%%%%%%
%
%%%%%%%%%%%%%%%%%%%%%%%%%%%%%%%%%%%%%%%%%%%%%%%%%%%%%%%%%%
%% Figure  15(continue)
%%%%%%%%%%%%%%%%%%%%%%%%%%%%%%%%%%%%%%%%%%%%%%%%%%%%%%%%%%%
\begin{figure*}[htbp]
\centering
\includegraphics[width=0.75\textwidth]{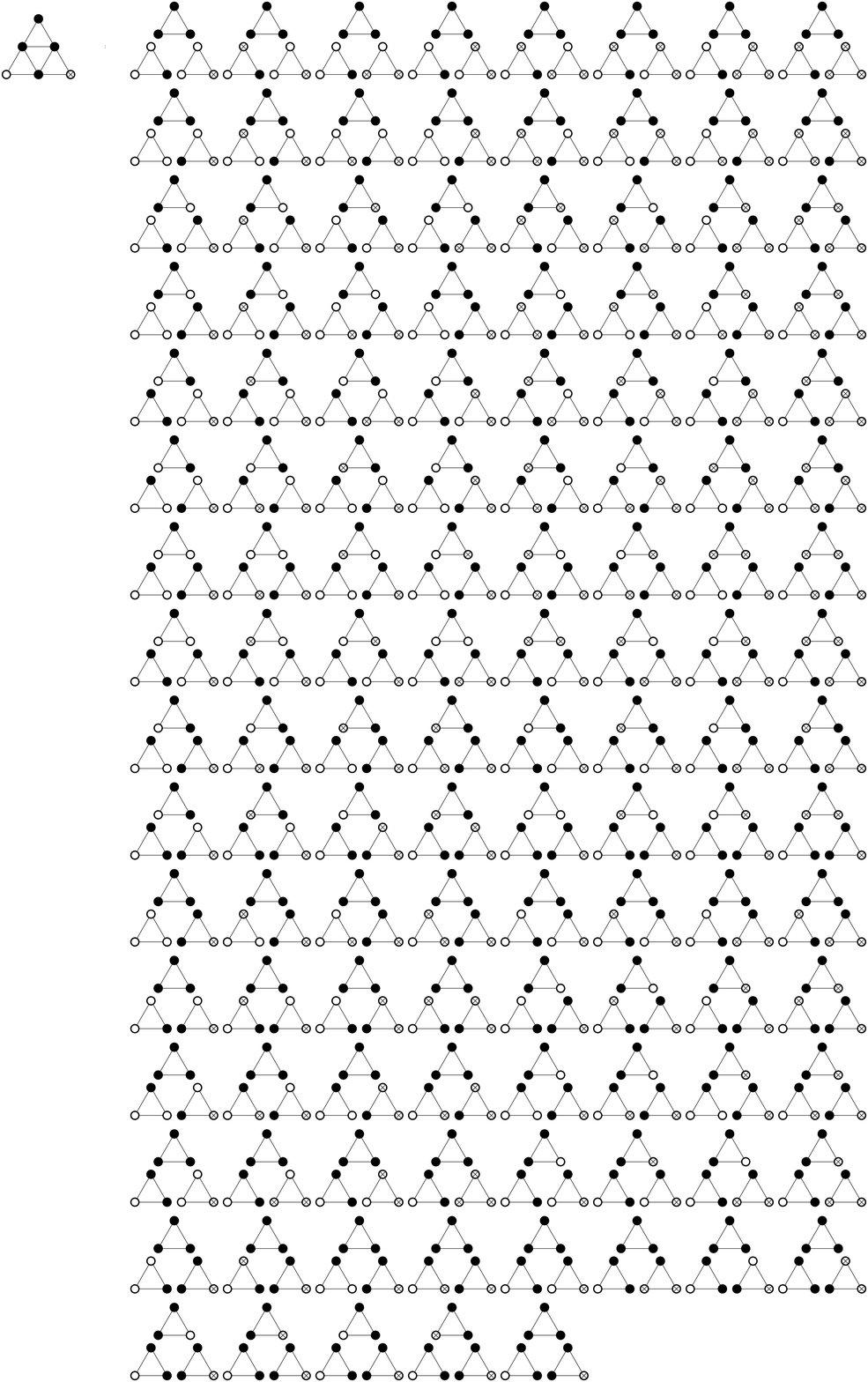}
\caption{\label{SGphi1}(continued) Illustration of all possible configurations of EDSs in $\mathcal{B}_{n+1}^1$ for $\mathcal{S}_{n+1}$.}
\end{figure*}
%%%%%%%%%%%%%%%%%%%%%%%%%%%%%%%%%%%%%%%%%%%%%%%%%%%%%%%%%%%
%
%%%%%%%%%%%%%%%%%%%%%%%%%%%%%%%%%%%%%%%%%%%%%%%%%%%%%%%%%%
%% Figure  16
%%%%%%%%%%%%%%%%%%%%%%%%%%%%%%%%%%%%%%%%%%%%%%%%%%%%%%%%%%%
\begin{figure*}[htbp]
\centering
\includegraphics[width=0.7\textwidth]{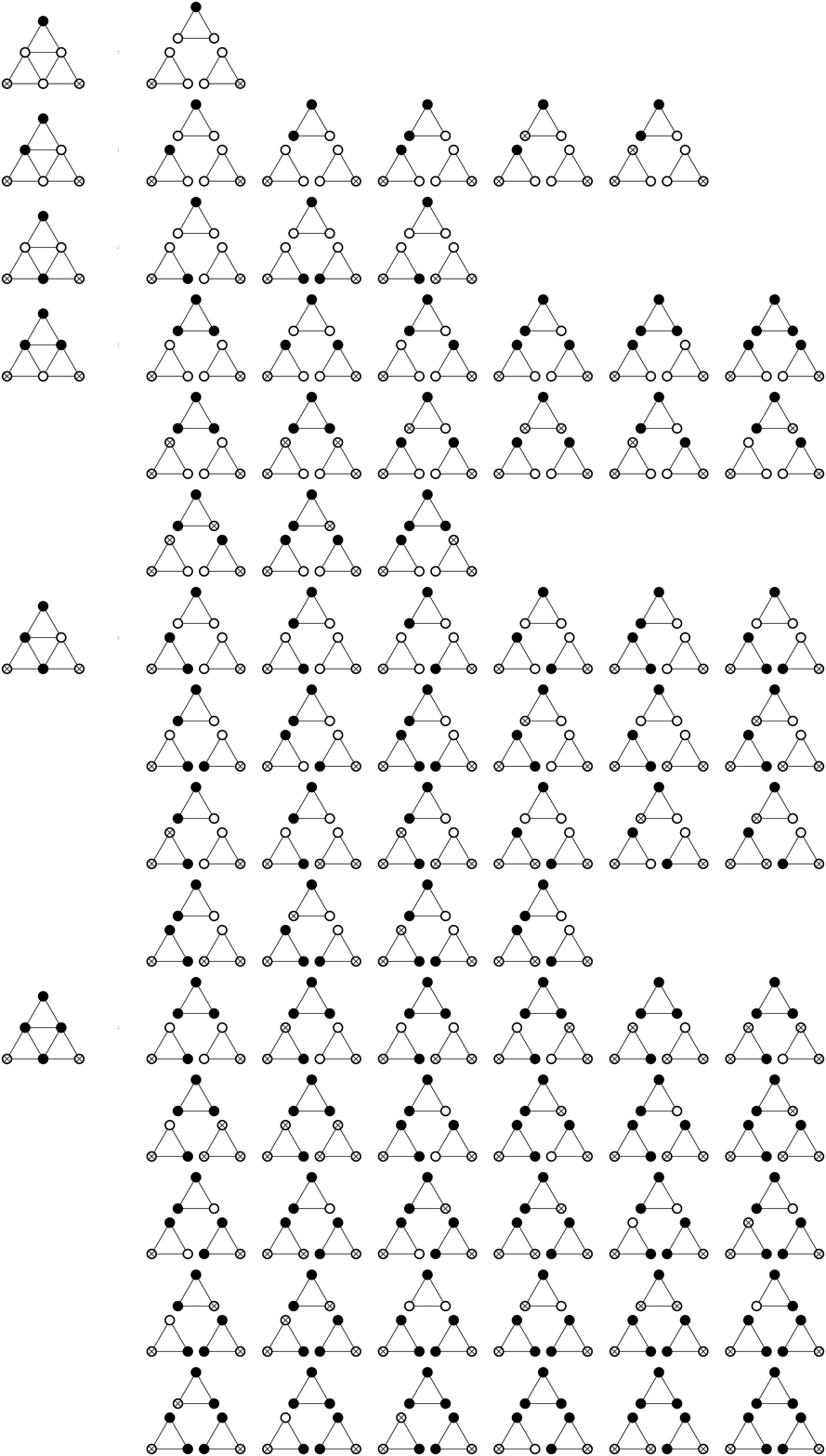}
\caption{\label{SGphi1}Illustration of all possible configurations of EDSs in $\mathcal{B}_{n+1}^2$ for $\mathcal{S}_{n+1}$.}
\end{figure*}
%%%%%%%%%%%%%%%%%%%%%%%%%%%%%%%%%%%%%%%%%%%%%%%%%%%%%%%%%%%
%
%%%%%%%%%%%%%%%%%%%%%%%%%%%%%%%%%%%%%%%%%%%%%%%%%%%%%%%%%%
%% Figure  17
%%%%%%%%%%%%%%%%%%%%%%%%%%%%%%%%%%%%%%%%%%%%%%%%%%%%%%%%%%%
\begin{figure*}[htbp]
\centering
\includegraphics[width=0.7\textwidth]{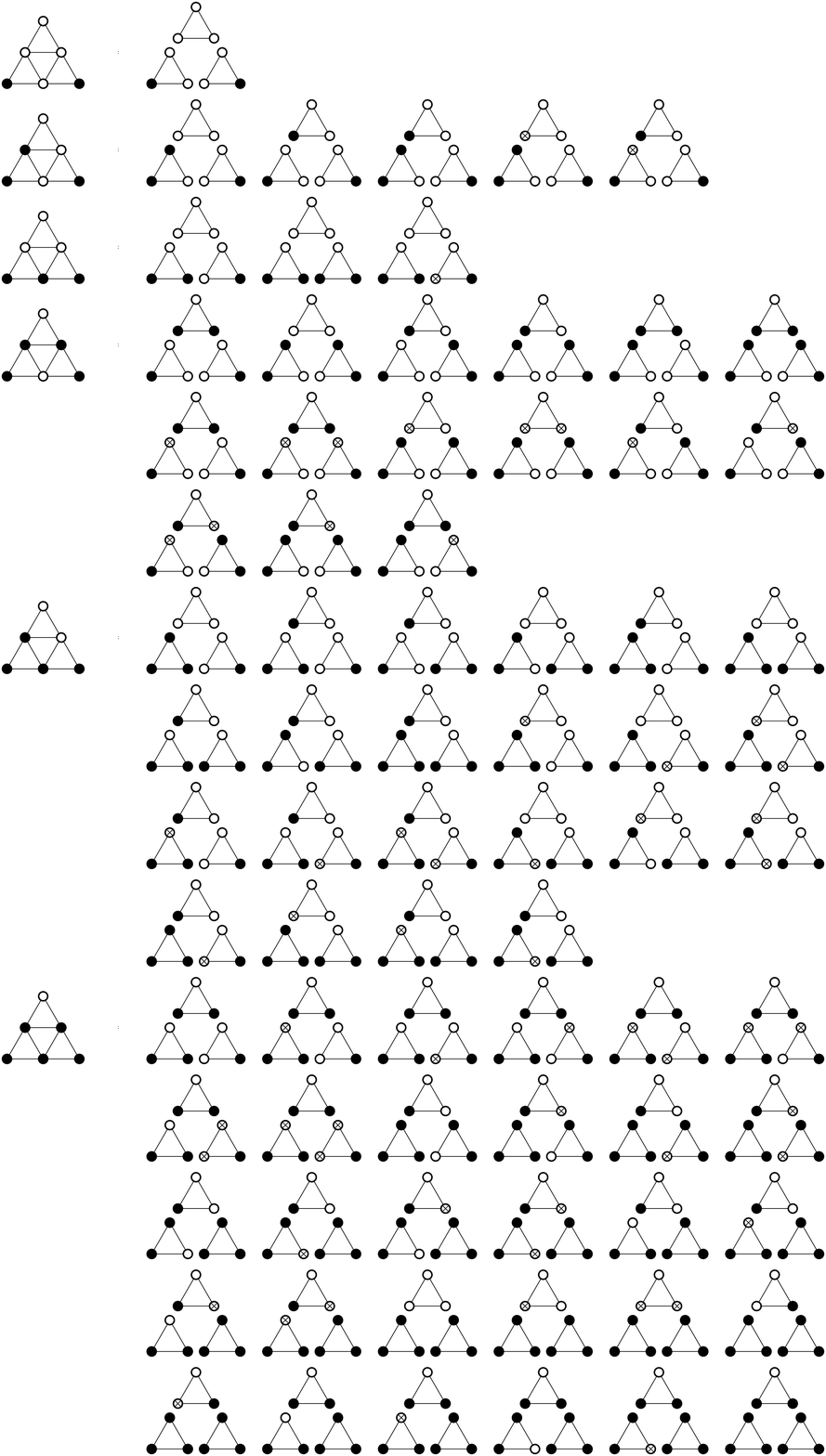}
\caption{\label{SGxi0}Illustration of all possible configurations of EDSs in $\mathcal{C}_{n+1}^0$ for $\mathcal{S}_{n+1}$.}
\end{figure*}
%%%%%%%%%%%%%%%%%%%%%%%%%%%%%%%%%%%%%%%%%%%%%%%%%%%%%%%%%%%
%
%%%%%%%%%%%%%%%%%%%%%%%%%%%%%%%%%%%%%%%%%%%%%%%%%%%%%%%%%%
%% Figure  18
%%%%%%%%%%%%%%%%%%%%%%%%%%%%%%%%%%%%%%%%%%%%%%%%%%%%%%%%%%%
\begin{figure*}[htbp]
\centering
\includegraphics[width=0.7\textwidth]{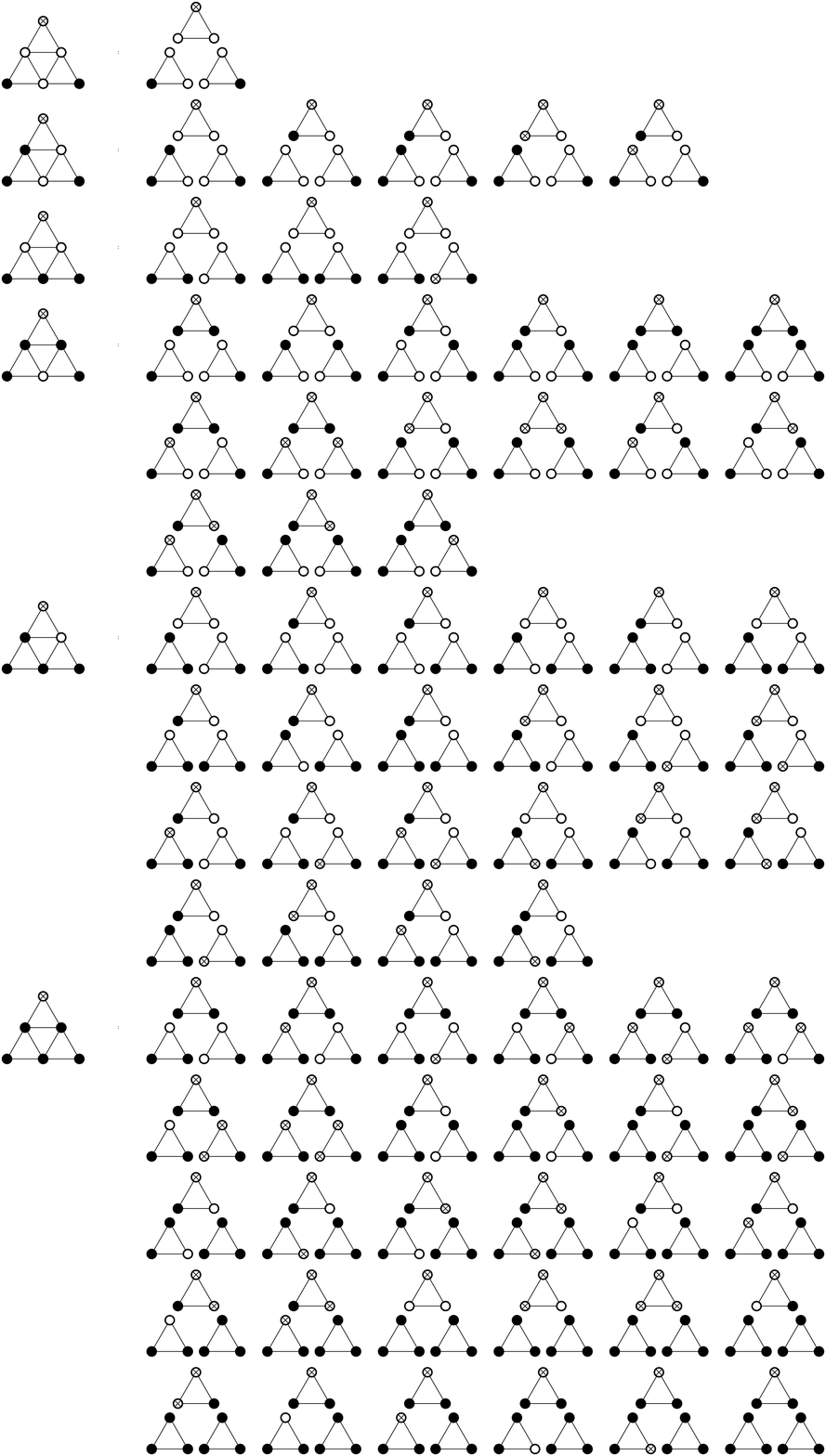}
\caption{\label{SGxi1}Illustration of all possible configurations of EDSs in $\mathcal{C}_{n+1}^1$ for $\mathcal{S}_{n+1}$.}
\end{figure*}
%%%%%%%%%%%%%%%%%%%%%%%%%%%%%%%%%%%%%%%%%%%%%%%%%%%%%%%%%%%
%
%%%%%%%%%%%%%%%%%%%%%%%%%%%%%%%%%%%%%%%%%%%%%%%%%%%%%%%%%%
%% Figure  19
%%%%%%%%%%%%%%%%%%%%%%%%%%%%%%%%%%%%%%%%%%%%%%%%%%%%%%%%%%%
\begin{figure*}[htbp]
\centering
\includegraphics[width=0.8\textwidth]{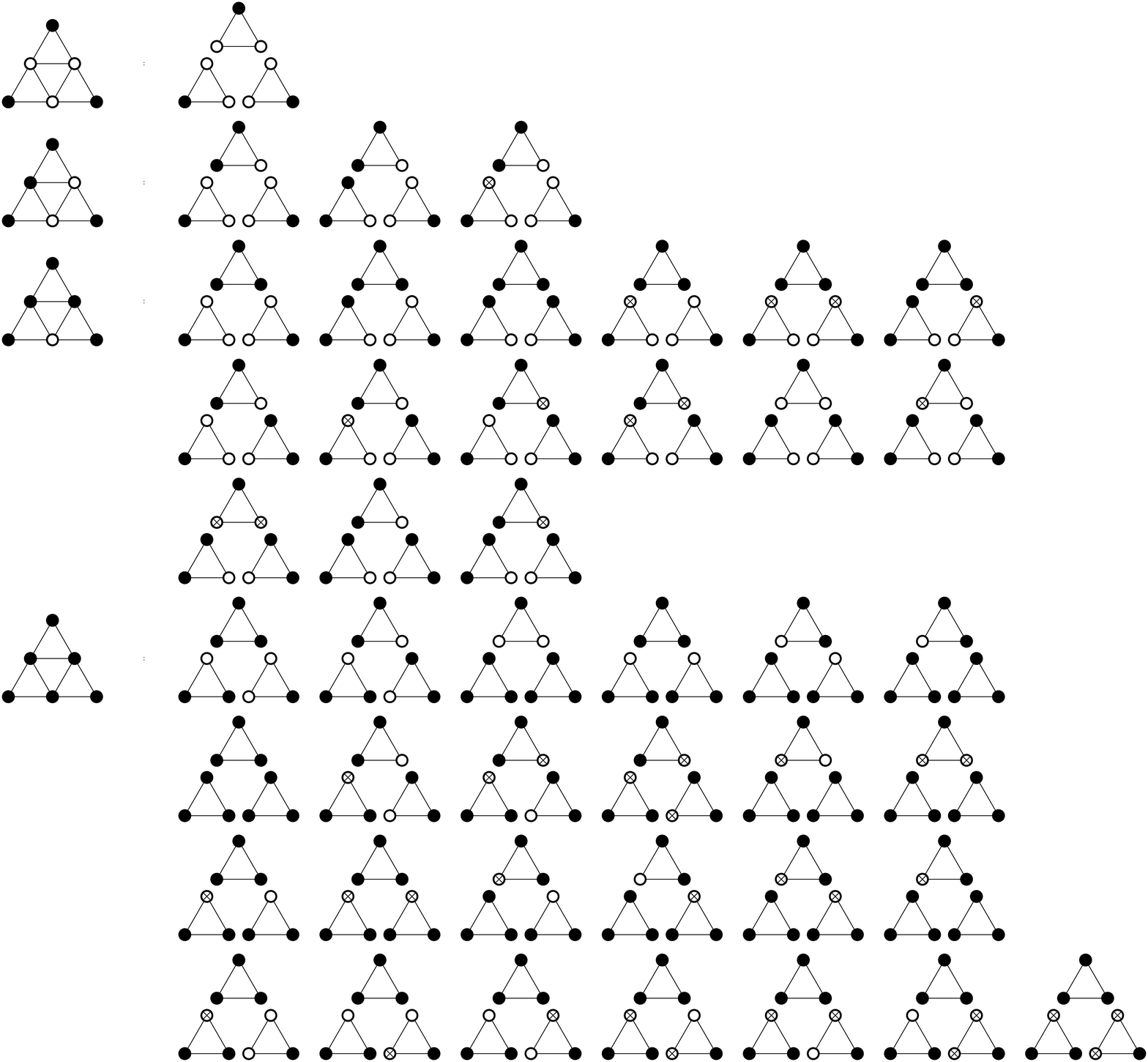}
\caption{\label{SGeta0}Illustration of all possible configurations of EDSs in $\mathcal{D}_{n+1}^0$  for $\mathcal{S}_{n+1}$.}
\end{figure*}
%%%%%%%%%%%%%%%%%%%%%%%%%%%%%%%%%%%%%%%%%%%%%%%%%%%%%%%%%%%

\begin{lemma}\label{leSGDom03}
For the   Sierpi\'nski graph $\mathcal{S}_n$ with $n \geq 3$,
\begin{align}\label{SGset01}
& a_n^3+1 = a_n^0 = a_n^1 = a_n^2 = b_n^1= b_n^2 \nonumber\\
&  = c_n^1 = b_n^0-1 = c_n^0-1 = d_n^0-1.
\end{align}
\end{lemma}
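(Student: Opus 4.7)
The plan is to argue by induction on $n$, starting with $n = 3$. For the base case, each of the ten quantities $a_3^0, a_3^1, a_3^2, a_3^3, b_3^0, b_3^1, b_3^2, c_3^0, c_3^1, d_3^0$ is computed either by direct enumeration on $\mathcal{S}_3$ or, more efficiently, by feeding explicit values for $\mathcal{S}_2$ into the recursions of Lemma~\ref{leSGDom02}; the chain of equalities is then a finite numerical check.

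For the inductive step, assume the chain holds at level $t \geq 3$ and set $k := a_t^0$. The hypothesis then reads $a_t^0 = a_t^1 = a_t^2 = b_t^1 = b_t^2 = c_t^1 = k$, $a_t^3 = k - 1$, and $b_t^0 = c_t^0 = d_t^0 = k + 1$. Substituting these values into each of the ten recursions (\ref{SGDom01})--(\ref{SGDom10}), every triple in every $\min$ expression collapses to the form $3k + \delta$ with $\delta = h - \ell$, where $h$ counts the \emph{high} factors (i.e.\ among $b_t^0, c_t^0, d_t^0$) in the triple and $\ell$ counts the \emph{low} factors (i.e.\ $a_t^3$). The induction reduces to verifying, for each recursion, that the smallest $\delta$ realized by a listed triple matches the predicted value: $\delta = 0$ for $a_{t+1}^0, a_{t+1}^1, a_{t+1}^2, b_{t+1}^1, b_{t+1}^2, c_{t+1}^1$; $\delta = -1$ for $a_{t+1}^3$; and $\delta = +1$ for $b_{t+1}^0, c_{t+1}^0, d_{t+1}^0$. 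Writing $k' := 3k = a_{t+1}^0$ then recovers the entire chain of equalities with $k'$ in place of $k$, closing the induction.

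The main obstacle is sheer bookkeeping: the $b_{n+1}^1$ recursion has on the order of eighty summands, and several of the others are only modestly shorter. Still, each verification is routine. A triple realizes $\delta = -1$ only if it contains $a_t^3$ together with two middle-type factors; one scans each recursion for such a triple. For $a_{t+1}^3$ it is present (for instance $a_t^3 + 2b_t^1$), which delivers the target $\delta = -1$; for the other nine recursions every $a_t^3$-containing triple carries at least one high factor, forcing $\delta \geq 0$. A symmetric scan for the high-type quantities $b_{t+1}^0, c_{t+1}^0, d_{t+1}^0$ shows that every listed triple contains at least one high factor, forcing $\delta \geq 1$, while witnesses such as $a_t^0 + b_t^1 + c_t^0$, $2a_t^1 + d_t^0$ and $2b_t^1 + d_t^0$ achieve $\delta = 1$ exactly. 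The six middle-type quantities are handled similarly: each admits a triple made entirely of middle factors (for example $3a_t^0$, $a_t^0 + 2a_t^1$, $a_t^0 + 2a_t^2$, $3b_t^1$, $b_t^2 + 2c_t^1$, $2b_t^1 + c_t^1$), pinning $\delta$ at $0$, while no listed triple in these recursions couples $a_t^3$ with two middle factors.
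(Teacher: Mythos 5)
Your overall strategy is the same as the paper's: induct on $n$, verify the ten values for $\mathcal{S}_3$ by hand, and close the induction by substituting the hypothesized values into the recursions of Lemma~\ref{leSGDom02} (the paper compresses this step into ``it is easy to check''). Your low/middle/high bookkeeping with $\delta=h-\ell$ is a sensible way to organize that check, and your scans for the six middle-type quantities and for $a_{t+1}^3$ are consistent with the printed recursions (a few of your named witnesses, e.g.\ $a_t^0+2a_t^1$ for Eq.~\eqref{SGDom02}, are not literally in the lists, but harmless substitutes such as $2a_t^0+a_t^1$ are).

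However, the inductive step does not close as you describe it for $d_{n+1}^0$. You assert that in the three high-type recursions ``every listed triple contains at least one high factor, forcing $\delta\geq 1$,'' but Eq.~\eqref{SGDom10} explicitly lists the all-middle triple $3c_n^1$. Under the induction hypothesis $c_t^1=a_t^0=k$ this term evaluates to $3k$, so the recursion as printed gives $d_{t+1}^0\leq 3k=a_{t+1}^0$, i.e.\ $\delta=0$, contradicting the claimed $d_{t+1}^0=a_{t+1}^0+1$. This is not a removable slip in your scan: the configuration behind $3c_n^1$ (each copy in a $\mathcal{C}_n^1$ state with its cross placed at a junction where the neighbouring copy is filled, and its two filled vertices at the other junction and at the outmost vertex of $\mathcal{S}_{n+1}$) really does produce an EDS of $\mathcal{S}_{n+1}$ with all three outmost vertices filled and cardinality $3c_n^1$, so with the base values $c_3^1=5$, $a_3^0=5$ one gets $d_4^0\leq 15=a_4^0$ rather than $16$. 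Hence either the term $3c_n^1$ in Eq.~\eqref{SGDom10} is spurious or the $d_n^0$ entry of the chain \eqref{SGset01} itself needs correction; in either case your proof must confront this term explicitly rather than assert it is absent, and as written the verification for $d_{n+1}^0$ (the only place the discrepancy surfaces) is a genuine gap.
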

\begin{proof}
We prove this lemma by induction on $n$.

For $n = 3$, it is straightforward  to check by hand that $a_3^3 =4$, $a_3^0 = 5$, $a_3^1=5$, $a_3^2=5$, $b_3^1=5$, $b_3^2 = 5$, $c_3^1=5$, $b_3^0=6$, $c_3^0=6$, and $d_3^0=6$. Thus, the result holds for $n = 3$.

Let us assume that the lemma is true  for $n = t$. For $n =t+1$, by
induction assumption and Lemma~\ref{leSGDom02}, it is easy to check that the result is true for $n =t+1$.
\end{proof}

\begin{theorem}
The edge domination number of the Sierpi\'nski graph $\mathcal{S}_n$, $n\geq3$, is $\gamma_n = 5\cdot 3^{n-3}$.
\end{theorem}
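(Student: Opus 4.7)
The plan is to reduce the theorem, via the two preceding lemmas, to computing a single quantity $a_n^0$, derive a clean one-step recursion for it, and then solve explicitly.

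First, I would invoke Lemma~\ref{leSGDom01}, which asserts $\gamma_n = \min\{a_n^0, b_n^0, c_n^0, d_n^0\}$, together with Lemma~\ref{leSGDom03}, which gives $b_n^0 = c_n^0 = d_n^0 = a_n^0 + 1$ for every $n \geq 3$. Combining these tells us that $a_n^0$ is strictly the smallest of the four quantities, so $\gamma_n = a_n^0$. It therefore suffices to prove $a_n^0 = 5 \cdot 3^{n-3}$.

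Next, I would establish the recursion $a_{n+1}^0 = 3 a_n^0$ by substituting the equalities of Lemma~\ref{leSGDom03} into Eq.~\eqref{SGDom01}. Writing $k := a_n^0$, Lemma~\ref{leSGDom03} yields $a_n^0 = a_n^1 = a_n^2 = b_n^1 = b_n^2 = c_n^1 = k$, $b_n^0 = c_n^0 = d_n^0 = k+1$, and $a_n^3 = k-1$. The key observation is that on the right-hand side of Eq.~\eqref{SGDom01} every summand is a sum of three quantities drawn from $\{a_n^0, a_n^1, a_n^2, b_n^0, b_n^1, c_n^0\}$; crucially, the only strictly smaller quantity, namely $a_n^3$, never appears. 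Hence every summand in the minimization is at least $3k$, and this lower bound is attained by the explicit summand $3a_n^0$ which does occur in the list. This yields
\begin{equation*}
    a_{n+1}^0 = 3 a_n^0.
\end{equation*}

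Finally, I would solve the recursion. The proof of Lemma~\ref{leSGDom03} already checked the base case $a_3^0 = 5$, so induction gives $a_n^0 = 5 \cdot 3^{n-3}$ for all $n \geq 3$, and therefore $\gamma_n = 5 \cdot 3^{n-3}$. The only non-routine step in the whole argument is the bookkeeping verification that $a_n^3$ is indeed absent from the (quite long) list of summands in Eq.~\eqref{SGDom01}; this is the main obstacle, but it is a finite check, and once confirmed the recursion and its closed-form solution follow with no further computation.
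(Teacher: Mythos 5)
Your proposal is correct and follows essentially the same route as the paper: reduce $\gamma_n$ to $a_n^0$ via Lemmas~\ref{leSGDom01} and~\ref{leSGDom03}, obtain $a_{n+1}^0=3a_n^0$ from Lemma~\ref{leSGDom02}, and solve with the base value $a_3^0=5$. You merely make explicit the finite check (absent from the paper's one-line argument) that every summand in Eq.~\eqref{SGDom01} avoids $a_n^3$ and hence is at least $3a_n^0$, which is a welcome clarification but not a different method.
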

\begin{proof}
Combining  Lemmas~\ref{leSGDom01}, \ref{leSGDom02}, and~\ref{leSGDom03}, we obtain  $\gamma_{n+1} = a_{n+1}^0 = 3a_n^0 = 3\gamma_n$. With the initial condition $\gamma_3 = 5$, we obtain $\gamma_n = 5\cdot3^{n-3}$  for all $n \geq 3$.
\end{proof}

\subsection{The number of minimum edge dominating sets}

Let $x_n$ denote the number of MEDSs in the Sierpi\'nski graph $\mathcal{S}_{n}$. Let $y_n$, $z_n$, and $w_n$  denote the number of EDSs in  $\mathcal{B}_{n}^1$, $\mathcal{C}_{n}^1$, and $\mathcal{B}_{n}^2$, respectively.

\begin{theorem}\label{NumEDSg}
For the Sierpi\'nski graph $\mathcal{S}_{n}$, $n\geq 3$, the four quantities $x_n$, $y_n$, $z_n$ and $w_n$ can be obtained by the following recursion relations.
\begin{equation}\label{SGset02}
x_{n+1} = x_n^3+y_n^3,
\end{equation}
\begin{align}\label{SGset03}
y_{n+1} =& x_ny_n^2+2x_ny_nz_n+y_n^3+\nonumber\\
&4y_n^2z_n+2y_nz_nw_n+3y_nz_n^2,
\end{align}
\begin{equation}\label{SGset04}
z_{n+1} = 2y_n^2z_n+2y_nz_n^2+z_n^2w_n,
\end{equation}
\begin{align}\label{SGset05}
w_{n+1} = & 2y_n^3+2y_n^2w_n+2y_n^2z_n+\nonumber\\
&4y_nz_n^2+4w_nz_n^2+z_n^3,
\end{align}
with the initial condition $x_3=2$, $y_3=9$, $z_3=4$, and $w_3=16$.
\end{theorem}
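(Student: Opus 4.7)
The plan is to prove all four recursions simultaneously by exploiting the self-similar decomposition of $\mathcal{S}_{n+1}$ into three copies $\mathcal{S}_n^{(1)}, \mathcal{S}_n^{(2)}, \mathcal{S}_n^{(3)}$ glued at their outmost vertices, exactly as in Lemma~\ref{leSGDom02}, and then counting (rather than merely minimising over) the compatible configurations. Throughout, I will rely on Lemma~\ref{leSGDom03}, which pins down the cardinalities $a_n^i$, $b_n^j$, $c_n^k$, $d_n^0$ and therefore tells us which terms in the big minima of Lemma~\ref{leSGDom02} actually achieve equality with $\gamma_{n+1} = 5\cdot 3^{n-2}$ (and analogously with $b_{n+1}^1$, $c_{n+1}^1$, $b_{n+1}^2$).

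First I would set up the bijective framework: any EDS $\chi$ of $\mathcal{S}_{n+1}$ restricts, at each glued outmost vertex, to a pair of edge states coming from two distinct copies, and the state of that vertex in $\chi$ (filled / empty / cross) is determined by the two local states in the obvious way (filled if at least one side is filled, empty if both sides contribute a dominated edge, cross if at least one is cross and no side is filled). Hence enumerating MEDSs of $\mathcal{S}_{n+1}$ amounts to choosing, for each copy $\mathcal{S}_n^{(\theta)}$, a minimum EDS in one of the classes $\mathcal{A}_n^i$, $\mathcal{B}_n^j$, $\mathcal{C}_n^k$, $\mathcal{D}_n^0$, such that (i) the three local states at each glued vertex are globally consistent with the target class of $\mathcal{S}_{n+1}$, and (ii) the total cardinality is minimum. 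By Lemma~\ref{leSGDom03} this forces every copy's contribution to lie in the set $\{\mathcal{A}_n^0,\mathcal{A}_n^1,\mathcal{A}_n^2,\mathcal{B}_n^1,\mathcal{B}_n^2,\mathcal{C}_n^1\}$, all of which have the same cardinality; the classes $\mathcal{B}_n^0$, $\mathcal{C}_n^0$, $\mathcal{D}_n^0$ and $\mathcal{A}_n^3$ differ by~$1$ and are thus excluded from minimum configurations.

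Next I would read off from Figs.~\ref{SGTheta0}, \ref{SGphi1}, \ref{SGxi1}, and the analogous figure for $\mathcal{B}_{n+1}^2$ precisely which triples $(\alpha_1,\alpha_2,\alpha_3)$ of admissible classes yield a MEDS in $\mathcal{A}_{n+1}^0$ (i.e.\ contribute to $x_{n+1}$), in $\mathcal{B}_{n+1}^1$ (contributing to $y_{n+1}$), in $\mathcal{C}_{n+1}^1$ (contributing to $z_{n+1}$), and in $\mathcal{B}_{n+1}^2$ (contributing to $w_{n+1}$). For each admissible triple, the number of MEDSs so obtained is the product of the numbers of EDSs in the three chosen subclasses. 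By Lemma~\ref{leSGDom03} and a direct count at level $n=3$, the only subclasses with nontrivial minimum cardinality have sizes $x_n$ (for $\mathcal{A}_n^0$), $y_n$ (for $\mathcal{B}_n^1$), $z_n$ (for $\mathcal{C}_n^1$), $w_n$ (for $\mathcal{B}_n^2$), with the remaining admissible classes $\mathcal{A}_n^1$, $\mathcal{A}_n^2$ each equalling one of these (say $\mathcal{A}_n^1 \leftrightarrow y_n$ and $\mathcal{A}_n^2 \leftrightarrow z_n$ up to the rotational identification of the designated hub), which I would verify by an inductive step parallel to Lemma~\ref{leSGDom03}. Summing the monomials over all admissible triples, and consolidating using the rotational symmetry of $\mathcal{S}_{n+1}$, yields \eqref{SGset02}--\eqref{SGset05}; the initial data $x_3=2$, $y_3=9$, $z_3=4$, $w_3=16$ are verified by hand on $\mathcal{S}_3$.

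The main obstacle will be the bookkeeping in the second step: to avoid over- or under-counting one must carefully match each admissible triple in the figures with exactly one of $\mathcal{A}_{n+1}^0, \mathcal{B}_{n+1}^1, \mathcal{C}_{n+1}^1, \mathcal{B}_{n+1}^2$, keep track of which copy hosts which outmost vertex of $\mathcal{S}_{n+1}$ (to distribute rotational multiplicities like the coefficient $4$ in front of $y_n^2 z_n$ in \eqref{SGset03} or the coefficient $2$ in front of $y_n^3$ in \eqref{SGset05} correctly), and confirm that no other minimum-cardinality triple was missed. Once that case analysis is organised, the verification that each monomial on the right-hand sides of \eqref{SGset02}--\eqref{SGset05} corresponds to a unique admissible configuration (and vice versa) is routine.
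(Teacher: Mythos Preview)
Your overall strategy — decompose $\mathcal{S}_{n+1}$ into three copies, use Lemma~\ref{leSGDom03} to isolate the minimum-achieving triples in the big $\min$'s of Lemma~\ref{leSGDom02}, and then count products — is exactly the paper's approach (the paper's own proof is just two sentences invoking Lemma~\ref{leSGDom03}, the figures, and rotational symmetry).

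However, one concrete step in your plan is wrong. You assert that Lemma~\ref{leSGDom03} ``forces every copy's contribution to lie in $\{\mathcal{A}_n^0,\mathcal{A}_n^1,\mathcal{A}_n^2,\mathcal{B}_n^1,\mathcal{B}_n^2,\mathcal{C}_n^1\}$'' and that $\mathcal{A}_n^3,\mathcal{B}_n^0,\mathcal{C}_n^0,\mathcal{D}_n^0$ are excluded because their cardinalities differ by~$1$. But $a_n^3=a_n^0-1$ is \emph{smaller}, not larger, so a triple such as
\[
a_n^3+b_n^1+d_n^0=(m-1)+m+(m+1)=3m
\]
also attains the minimum; and this term (together with $a_n^3+c_n^0+c_n^1$, among others) appears explicitly in the recursion for $b_{n+1}^1$ in Lemma~\ref{leSGDom02}. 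One checks easily that the corresponding configuration is realizable (the $\mathcal{A}_n^3$ copy supplies the cross outmost vertex of $\mathcal{S}_{n+1}$, its two inner cross vertices are repaired by filled vertices from the $\mathcal{D}_n^0$ and $\mathcal{B}_n^1$ copies). So your exclusion principle fails for $y_{n+1}$, $z_{n+1}$ and $w_{n+1}$.

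Consequently, to close the system in only the four variables $x_n,y_n,z_n,w_n$ you must also control the counts $|\mathcal{A}_n^1|,|\mathcal{A}_n^2|,|\mathcal{A}_n^3|,|\mathcal{B}_n^0|,|\mathcal{C}_n^0|,|\mathcal{D}_n^0|$ — not just $|\mathcal{A}_n^1|$ and $|\mathcal{A}_n^2|$. Your guessed identifications $\mathcal{A}_n^1\leftrightarrow y_n$ and $\mathcal{A}_n^2\leftrightarrow z_n$ are neither proved nor obviously correct, and the remaining four counts are simply absent from your plan. Until those six auxiliary counts are shown to be determined by $(x_n,y_n,z_n,w_n)$ (via a separate induction parallel to Lemma~\ref{leSGDom03}), the bookkeeping you describe cannot produce the stated recursions. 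The paper's own proof is admittedly terse on this same point, but your proposal actively asserts something false where the paper is merely silent.
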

\begin{proof}
We first prove Eq.~\eqref{SGset02}. By definition, $x_n$ is in fact the number of all different MEDSs for the Sierpi\'nski graph $\mathcal{S}_n$. One can determine $x_n$ via enumerating all possible configurations of MEDSs for $\mathcal{S}_n$. By Lemma~\ref{leSGDom03} and Fig.~\ref{SGTheta0}, Eq.~\eqref{SGset02} is established by exploiting the rotational symmetry of the Sierpi\'nski graph.

In a similar way, we can prove the remaining Eqs.~\eqref{SGset03}-\eqref{SGset05}.
\end{proof}

\section{RESULT COMPARISON AND ANALYSIS}

In the preceding two sections, we studied the edge domination number and   the number of MEDSs  for the pseudofracal scale-free web and the Sierpi\'nski graph,  both of which have the same number of vertices and edges. For both networks, we obtained  exact values for the edge domination number, as well as recursion solutions to the number of MEDSs.

Our results indicate that the edge domination number of the pseudofracal scale-free web is  one-ninth of all edges, which is three-fifths of the edge domination number of the  Sierpi\'nski gasket. Thus, the edge domination number for the pseudofracal scale-free web is much less than that associated with the Sierpi\'nski gasket. Actually, in addition to the edge domination number,  the number of MEDSs of  the pseudofracal scale-free web is also smaller than that corresponding to the Sierpi\'nski gasket. In Table~\ref{SetNo}, we list the  number of MEDSs of $\mathcal{G}_n$ and $\mathcal{S}_n$ for small $n$, which are obtained according to Theorems~\ref{NumEDSs} and~\ref{NumEDSg}.  From Table~\ref{SetNo}, we can see that for $n\geq 3$, the  number of MEDSs of $\mathcal{G}_n$ is always smaller than that of $\mathcal{S}_n$. However, for both graphs, the  number of MEDSs  grows exponentially with the number of edges $E_n$.

%%%%%%%%%%%%%%%%%%%%%%%%%%%%%%%%%%%%%
\begin{table*}
\tbl{The number of MEDSs in the pseudofractal scale-free web $\mathcal{G}_n$ and Sierpi\'nski gasket  $\mathcal{S}_n$ for small $n$.}
%\normalsize
%\centering
{\begin{tabular}{@{}|c|c|c|c|@{}}
\hline
\raisebox{-0.5ex}{$n$} & \raisebox{-0.5ex}{$E_n$} & \raisebox{-0.5ex}{MEDSs in $\mathcal{G}_n$} & \raisebox{-0.5ex}{MEDSs in $\mathcal{S}_n$}\\[0.5ex]
\hline
\hline
\raisebox{-0.5ex}{3} & \raisebox{-0.5ex}{$27$} & \raisebox{-0.5ex}{$1$} & \raisebox{-0.5ex}{$2$}\\[0.5ex]
\hline
\raisebox{-0.5ex}{4} & \raisebox{-0.5ex}{$81$} & \raisebox{-0.5ex}{$5$} & \raisebox{-0.5ex}{$737$}\\[0.5ex]
\hline
\raisebox{-0.5ex}{5} & \raisebox{-0.5ex}{$273$} & \raisebox{-0.5ex}{$223$} & \raisebox{-0.5ex}{$60406401428$}\\[0.5ex]
\hline
\raisebox{-0.5ex}{6} & \raisebox{-0.5ex}{$819$} & \raisebox{-0.5ex}{$12853595$} & \raisebox{-0.5ex}{$11968284390834034602027534554922752$}\\[0.5ex]
\hline
\end{tabular}}
\label{SetNo}
\end{table*}
%%%%%%%%%%%%%%%%%%%%%%%%%%%%%%

Because the size and the number of MEDSs of a graph are closely related to its structure, we argue that this distinction of MEDS problem between the pseudofractal scale-free web and the Sierpi\'nski graph highlights their structural disparity  and   can be heuristically  understood as follows.

Although both  networks have the same number of vertices and edges, the pseudofractal scale-free web is heterogeneous, the Sierpi\'nski graph homogeneous.  In the pseudofractal scale-free web, there exist some  high-degree vertices.  As shown above, for any MEDS of  the pseudofractal scale-free web, in order to minimize the size of the set, we should choose those edges incident to large-degree vertices as possible. However, once an edge incident to a large-degree vertex  is included in a MEDS, the other edges incident to it are not allowed to be in the set, which  substantially decreases the size of a  MEDS and the possible number of all MEDSs.  In the Sierpi\'nski graph,  all  vertices, except the three outmost ones, have degree of four. Thus, when constructing a MEDS, each plays a almost similar role. Any edge selected into a MEDS has less influence on  the edge domination number and the number of MEDSs.
Therefore, both the edge domination number and the number of MEDSs in the Sierpi\'nski graph are much larger than those corresponding to  the pseudofractal scale-free web.  Then, we naturally conclude that the heterogeneous  property has a great   effect  on the edge domination number and the number of MEDSs in a scale-free network.

We note that although we only consider  a particular  scale-free  network,  it is expected that the  edge domination set problem of other scale-free networks including real-world scale-free networks is qualitatively similar to that of the pseudofractal scale-free web. In other words, their edge domination number and the number of MEDSs are also much less,  compared with homogenous graphs.

\section{CONCLUSIONS}

We studied the edge domination number and  the number of MEDSs in the pseudofracal scale-free web and the Sierpi\'nski graph, which have the same number of vertices and edges. For both networks, by using their self-similarity we  determined explicit expressions for the edge domination number. For the former, the edge domination number  is  smaller taking up one-ninth  of  all edges in the network; while for the latter, the edge domination number  is larger taking up 5/27  of  all edges.  In addition, the number of MEDSs in the former network is also much less that for the latter network, but for both networks the number of MEDSs  grows exponentially with the total number of  edges.  Our work offers insight into applications of MEDSs in scale-free graphs.

Finally, it deserves to mention that the pseudofracal scale-free web is in fact constructed by iteratively using the triangulation operation on a complete  graph with three vertices~\cite{XiZhCo16b}.  Our computation method and process for computing the edge domination number and  the number of MEDSs are also applicable  to other graph operations, such as
subdivision~\cite{XiZhCo16}.

%\ack{This work was supported  in part by the National Key R \& D Program of China
%(No. 2018YFB1305104), the National Natural Science Foundation of China (No. 61872093),  Shanghai Municipal Science and Technology Major Project  (No.  2018SHZDZX01) and ZHANGJIANG LAB.}
%\section*{Acknowledgment}
\nonumsection{ACKNOWLEDGEMENTS} \noindent
This work was supported  in part by the National Natural Science Foundation of China (Nos. 61803248, 61872093, U20B2051, and U19A2066), the National Key R \& D Program of China
(No. 2018YFB1305104 and 2019YFB2101703), and Shanghai Municipal Science and Technology Major Project  (No.  2018SHZDZX01) and ZHANGJIANG LAB. Xiaotian Zhou was also supported by Fudan Undergraduate Research
Opportunities Program (FDUROP).

%and the Innovation Action Plan of Shanghai Science and Technology (Nos. 20222420800 and 20511102200).

%\textcolor[rgb]{0.00,0.00,1.00}{The authors are grateful to the anonymous reviewers for their valuable comments and suggestions, which have led to improvement of this paper.}

%\section*{References}

%\bibliographystyle{compj}% model1-num-names

\bibliographystyle{ws-ijait}
\bibliography{EdgeDomination}

\end{multicols}
\end{document}